\documentclass[11pt]{article}
\usepackage[margin=1in]{geometry}

\RequirePackage{graphicx}
\RequirePackage{amsmath}
\RequirePackage{amsfonts}
\RequirePackage{amssymb}
\RequirePackage{amsthm}
\ifdefined\intercal\else
  \DeclareMathSymbol{\intercal}{\mathbin}{AMSa}{"7C}
\fi
\RequirePackage{enumitem}
\RequirePackage{multirow}
\RequirePackage{booktabs}
\RequirePackage{threeparttable}
\RequirePackage{algorithm}
\RequirePackage{algpseudocode}
\RequirePackage{xcolor}
\RequirePackage{extarrows}
\RequirePackage[authoryear]{natbib}
\RequirePackage[colorlinks=true,citecolor=blue,urlcolor=blue,linkcolor=blue]{hyperref}

\makeatletter
\newcommand{\myitem}[1]{\item[#1]\protected@edef\@currentlabel{#1}}
\makeatother

\newcommand{\bfP}{\mathbf{P}}

\newcommand{\R}{\mathbb{R}}
\newcommand{\lb}{\left(}
\newcommand{\rb}{\right)}
\newcommand{\Var}{\operatorname{Var}}

\newcommand{\E}{\mathbb{E}}

\newcommand{\bfSigmahat}{	\hat{ \boldsymbol{\Sigma} }}

\newcommand{\bfQ}{\mathbf{Q}}
\newcommand{\bfSigma}{\boldsymbol{\Sigma}}
\newcommand{\bfLambda}{\boldsymbol{\Lambda}}
\newcommand{\bfX}{\mathbf{X}}
\newcommand{\bfI}{\mathbf{I}}

\newcommand{\bfy}{\mathbf{y}}
\newcommand{\op}{o_{\PR}(1)}

\newcommand{\cond}{\stackrel{\mathcal{D}}{\to}}
\newcommand{\conp}{\stackrel{\mathbb{P}}{\to}}
\newcommand{\tr}{\operatorname{tr}}

\newcommand{\PR}{\mathbb{P}}

\newcommand{\bfx}{\mathbf{x}}

\newcommand{\bfA}{\mathbf{A}}
\newcommand{\bfB}{\mathbf{B}}
\newcommand{\bfC}{\mathbf{C}}
\newcommand{\bfD}{\mathbf{D}}

\newcommand{\bfS}{\mathbf{S}}
\newcommand{\bfGamma}{\boldsymbol{\Gamma}}
\newcommand{\bfbeta}{\boldsymbol{\beta}}

\newcommand{\bfq}{\mathbf{q}}
\newcommand{\bfz}{\mathbf{z}}
\newcommand{\bfV}{\mathbf{V}}
\newcommand{\bfU}{\mathbf{U}}
\newcommand{\bfG}{\mathbf{G}}

\theoremstyle{plain}
\newtheorem{theorem}{Theorem}[section]
\newtheorem{lemma}{Lemma}[section]
\newtheorem{proposition}{Proposition}[section]

\theoremstyle{definition}

\newtheorem{example}{Example}[section]
\newtheorem{remark}{Remark}[section]
\newtheorem{assumption}{Assumption}[section]

\numberwithin{equation}{section}

\allowdisplaybreaks

\begin{document}

\title{Goodness-of-Fit Testing for Groupwise\\ Spherical Error Structures}

\author{
Daria~Tieplova\textsuperscript{1,2},
Nina~D\"ornemann\textsuperscript{1} and
Tim~Kutta\textsuperscript{1}\\
{\small \textsuperscript{1}Department of Mathematics, Aarhus University, Denmark}\\
{\small \textsuperscript{2}B. Verkin Institute for Low Temperature Physics and Engineering}\\
{\small of the NAS of Ukraine, Kharkiv, Ukraine}
}

\date{}
\maketitle

\begin{abstract}
The analysis of large data panels is important in econometrics and beyond. 
Prediction and inference methods for such data typically rely on simplifying model 
assumptions for the covariance structure of errors. One convenient assumption 
is what we call groupwise sphericity: that errors are uncorrelated across individuals and have 
constant variance within certain groups. While theoretically useful, in large panels 
groupwise sphericity is often too restrictive to apply in practice. We therefore develop 
new quantitative inference tools to test whether deviations from this model assumption are practically relevant. Our 
approach covers both large-dimensional data matrices and regression panels, in 
a regime where the cross-sectional dimension is proportional to the sample 
size. The theory is based on the analysis of extreme eigenvalues of the empirical covariance matrix and uses 
recent advances in random matrix theory. Numerical experiments demonstrate accurate size control and good power in finite samples.
\end{abstract}

\noindent\textbf{MSC:} 60B20, 62H15, 62J05.

\noindent\textbf{Keywords:} covariance structure, high-dimensional data, relevant hypothesis, spiked eigenvalue.

\section{Introduction}

Many statistical procedures for analyzing panel data rely on simplifying assumptions on the
covariance structure of model errors (e.g., \cite{stock:watson:2008,baltagi:feng:kao:2011}). One useful simplification is to place units into known groups and assume cross-sectionally uncorrelated errors and within-group variance homogeneity. Such assumptions of a structured covariance make prediction and inference
more tractable, but in large panels are rarely expected to hold exactly.
This raises a natural goodness-of-fit question: is the assumed covariance
structure a sufficiently accurate approximation, and can the size of its
deviation from the truth be quantified?\\
In this paper, we study this question for data matrices and regression panels
with large cross-sectional dimension. To introduce the problem, we focus on the
simpler case of data matrices. The regression problem is treated later. For now, define
\begin{align} \label{e:Y}
    \mathbf{Y}=(\mathbf{y}_1,\ldots,\mathbf{y}_n)\in \mathbb{R}^{p\times n},
\end{align}
where \(n\) denotes the sample size and \(p\) the cross-sectional dimension.
We assume that the columns \(\mathbf{y}_1,\ldots,\mathbf{y}_n\) are i.i.d. with
mean vector \(\mu\in\mathbb{R}^p\) and cross-sectional covariance matrix
\(\bfSigma\in\mathbb{R}^{p\times p}\).
Next, suppose that the \(p\) components are partitioned into \(L\) known groups. After
a possible reordering of the coordinates, these groups may be written as
\begin{align}     \{1,\ldots,\ell_1\},\quad
    \{\ell_1+1,\ldots,\ell_1+\ell_2\},\quad \ldots,\quad
    \Bigl\{\sum_{i=1}^{L-1}\ell_i+1,\ldots,\sum_{i=1}^{L}\ell_i\Bigr\},
\end{align}
where \(\sum_{i=1}^{L}\ell_i=p\). The assumption that the variances are allowed
to differ across these groups, but are constant within each group, is closely
related to what is often called groupwise heteroskedasticity. In the present
paper, we consider the stronger covariance model which, in addition, imposes
the absence of cross-sectional correlations. We refer to this model as
\emph{groupwise sphericity}. Thus, \(\bfSigma\) satisfies groupwise sphericity
if
\[
    \bfSigma = \bfD,
\]
where
\begin{align} \label{e:def:gr:sph}
    \bfD
    =
    \operatorname{diag}\bigl(
    \sigma_1^2 \bfI_{\ell_1},
    \sigma_2^2 \bfI_{\ell_2},
    \ldots,
    \sigma_L^2 \bfI_{\ell_L}
    \bigr)
\end{align}
for some \(\sigma_1^2,\ldots,\sigma_L^2>0\), and $\bfI_{\ell}$ denotes the $\ell$-dimensional identity matrix. The special case \(L=1\)
corresponds to the classical assumption of sphericity.
Our aim is not only to test the exact model \(\bfSigma=\bfD\), but to quantify
how far a general covariance matrix is from this idealized structure. For a
general covariance matrix \(\bfSigma\), denote by
\(\bfSigma_{ii}\in\mathbb{R}^{\ell_i\times \ell_i}\) its \(i\)-th diagonal
block and define
\begin{align} \label{e:mat:block}
    \sigma_i^2 := \frac{1}{\ell_i}\operatorname{tr}(\bfSigma_{ii})>0,
    \qquad i=1,\ldots,L.
\end{align}
Let \(\bfD=\bfD(\bfSigma)\) be the corresponding groupwise spherical matrix
defined by \eqref{e:def:gr:sph}. We measure deviations from groupwise
sphericity by invoking the measure
\begin{align} \label{e:S:M}
    \mathcal{S}(\bfSigma)
    :=
    \bigl\|
        \bfD^{-1/2}\bfSigma \bfD^{-1/2} - \bfI_p
    \bigr\|,
\end{align}
with \(\|\cdot\|\) denoting the spectral norm. This quantity is scale-free and
satisfies the relation
\[
    \mathcal{S}(\bfSigma)=0
    \quad \Longleftrightarrow \quad
    \bfSigma \text{ is groupwise spherical}.
\]
More generally, \(\mathcal{S}(\bfSigma)\) measures the largest spectral
deviation from the groupwise spherical approximation. It therefore leads
naturally to relevant goodness-of-fit hypotheses of the form
\begin{align} \label{e:H0H1}
    H_0:\mathcal{S}(\bfSigma)\leq \Delta
    \qquad \text{versus} \qquad
    H_1:\mathcal{S}(\bfSigma)>\Delta,
\end{align}
where \(\Delta> 0\) is a user-specified tolerance level \citep[see also][]{lavergne:2014}. The aim of this paper is to test the hypothesis pair \eqref{e:H0H1}, or, equivalently, to produce confidence intervals for the quantity $\mathcal{S}(\bfSigma)$.
In scenarios where \(p\) is fixed and \(n\to\infty\), inference for
\(\mathcal{S}(\bfSigma)\) can be developed by classical arguments. In
particular, one may combine the delta method with differentiability properties
of the map
\[
    \bfSigma \mapsto \bfD(\bfSigma)^{-1/2}
        \bfSigma
        \bfD(\bfSigma)^{-1/2}.
\]

\noindent This fixed-dimensional regime, however, is not appropriate for the
settings considered here, where cross-sections are large. Notice that in many applications the cross-sectional dimension
is of the same order as the sample size. We therefore study the proportional
asymptotic regime in which \(p\) and \(n\) tend to infinity together.
In this regime, the problem changes fundamentally. The sample
covariance matrix $\widehat\bfSigma_n$ is not a consistent estimator of \(\bfSigma\) in spectral
norm. Consequently, the plug-in statistic
\(\mathcal{S}(\widehat\bfSigma_n)\) is not, in general, a consistent estimator
of \(\mathcal{S}(\bfSigma)\) \citep{bai:silverstein:2010}. Instead, its behavior is governed by the extreme
eigenvalue distribution of the high-dimensional random matrix $\widehat\bfSigma_n$.  In particular, small
deviations from groupwise sphericity, say $\mathcal{S}(\bfSigma)=\delta\approx0$, may be undetectable by \(\mathcal{S}(\widehat\bfSigma_n)\) because the limit of this statistic is independent of $\delta$ for all $\delta \in [0,\delta_0)$ and some value $\delta_0>0$ \citep{Baik:BenArous:Peche:2005, Ding:Yang:2021}.\footnote{For spiked alternatives, likelihood-ratio tests using the full spectrum can nevertheless have nontrivial asymptotic power in the subcritical regime \citep{onatski:moreira:hallin:2013}.} Since such minor deviations are also practically less relevant, testing for exactly $\mathcal{S}(\bfSigma)=0$ may not be particularly meaningful.
In contrast, larger deviations from groupwise sphericity are both practically meaningful and statistically detectable. In this scenario, the leading eigenvalues of $\widehat\bfSigma_n$ become an interpretable statistic that (after some bias correction) can be used to consistently estimate $\mathcal{S}(\bfSigma)$. \\
The contributions of this paper are as follows. First, we introduce a
scale-free spectral-norm measure of deviation from groupwise sphericity and use
it to formulate relevant goodness-of-fit hypotheses. Second, we develop
 inference methods for this measure in the regime where cross-sectional dimension and sample size are proportional,
based on the asymptotic behavior of extreme eigenvalues. Third, we extend the
methodology from i.i.d. high-dimensional vectors to linear regression
panels, where the covariance structure concerns the regression errors.\\
The remainder of the paper is organized as follows. In the rest of this
introduction, we discuss related literature. In Section \ref{sec_tests}, we
develop our main methodology and theory for \(\mathcal{S}(\bfSigma)\). We first consider the case of
i.i.d. high-dimensional observations as in \eqref{e:Y}, and then turn to linear
regression panels. Numerical results are presented in Section \ref{sec:exp}.
Proofs of the main theorems and further technical details are given at the end of the paper.

\subsection{Related literature}

We refer to a selection of related works, without attempting an exhaustive
review. The use of a positive tolerance in \eqref{e:H0H1} is closely related
to the literature on relevant hypotheses and equivalence testing, where the
size of a deviation determines its practical relevance. An econometric
treatment of approximate model restrictions is given by
\cite{lavergne:2014}. Recent applications to panel data include testing for
equivalence of pre-trends in difference-in-differences estimation
\citep{dette:schumann:2024} and validating approximate slope homogeneity
\citep{kutta:dette:2024}. Equivalence tests seek evidence that deviations
are sufficiently small; our hypotheses \eqref{e:H0H1} instead concern
whether deviations exceed the prescribed tolerance. Relevant hypotheses have also been considered in high-dimensional settings; see, for example, \cite{Bastian:Dette:Heiny:2024} for testing whether pairwise dependence measures exceed a prescribed threshold. \\
High-dimensional tests of exact sphericity have been studied by
\cite{ledoit:wolf:2002, chen:zhang:zhong:2010, hu:li:liu:wang:2019, zou2014multivariate, dornemann2024linear}, among many others.
For tests of sphericity of regression errors in fixed effects panels,
including non-normal disturbances, see \cite{baltagi:kao:peng:2015}.
\cite{onatski:moreira:hallin:2013} analyze the asymptotic power of
sphericity tests against spiked alternatives and clarify the role of the
phase transition for tests based on the largest sample eigenvalue. More recently, \cite{Zhang:Jiang:Yao:2025} develop a bootstrap approach to operator-norm-based tests of a specified covariance matrix.
\\
Our analysis builds on random matrix theory for spiked covariance
matrices \citep{Johnstone:2001}. \cite{bai:yao:2012, bai:yao:2008, Paul:2007, bai:ding:2012, Doernemann:Paul:2026} establish central limit theorems for sample eigenvalues in
such models, while
\cite{zhang:zheng:pan:zhong:2022, li2020asymptotic} derive central limit theorems for
spiked sample eigenvalues under general population covariance structures.
Related eigenvalue-based methods for comparing two covariance matrices
are developed in \cite{zhang:zheng:pan:zhong:2022} and
\cite{lam:doernemann:dette:2025}. Working in the supercritical regime,
we use corrected extreme eigenvalues to obtain inference for
$\mathcal{S}(\bfSigma)$, account for estimation of the group variances,
and extend the procedure to regression residuals.
\section{Quantifying groupwise
sphericity} \label{sec_tests}

We develop tests for the model assumption of approximate sphericity, beginning with a data matrix and subsequently extending our method to linear regression panels.

\subsection{Methodology}

Recall the setting from our introduction, and in particular, the definition of the $p$-dimensional data vectors $\mathbf{y}_i, 1 \le i \le n$ in \eqref{e:Y}, the group structure \eqref{e:def:gr:sph}, the matrix $\mathbf
D=\mathbf{D}(\bfSigma)$ in \eqref{e:def:gr:sph}, submatrices $\bfSigma_{ii}$ in \eqref{e:mat:block} and the goodness-of-fit measure $\mathcal{S}(\bfSigma)$ in \eqref{e:S:M}. We also define the empirical covariance matrix of the data 
\begin{align*}
    \widehat{\bfSigma}_n := \frac{1}{n} \sum_{j=1}^n \lb \mathbf{y}_j - \overline{\mathbf{y}} \rb \lb \mathbf{y}_j - \overline{\mathbf{y}}  \rb^\top , \quad\,\,\, \textnormal{where}\quad\,\,\, \overline{\mathbf{y}} := \frac{1}{n} \sum_{j=1}^n \mathbf{y}_j.
\end{align*} 
As a first step towards developing our new methodology, we briefly analyze the deviation measure $\mathcal{S}(\bfSigma)$. For this purpose, define for a symmetric matrix with real components $\mathbf{A} \in \mathbb{R}^{p \times p}$ the largest eigenvalue $\lambda_1(\mathbf{A})$ and the smallest eigenvalue $\lambda_p(\mathbf{A})$. Since $\bfD^{-1/2}\bfSigma \bfD^{-1/2}$ is symmetric and positive semidefinite, we can express the spectral norm in terms of the extreme eigenvalues
\begin{align} \label{e:simp:1}
      \bigl\|
        \bfD^{-1/2}\bfSigma \bfD^{-1/2} - \bfI_p
    \bigr\| = \max\big(|\lambda_1(\bfD^{-1/2}\bfSigma \bfD^{-1/2})-1|, |1-\lambda_p(\bfD^{-1/2}\bfSigma \bfD^{-1/2})|\big).
\end{align}
Here, we have used the fact that the identity matrix is simultaneously diagonalizable with any symmetric matrix.
Next, it is possible to eliminate the absolute values on the right-hand side of \eqref{e:simp:1}. To see this, consider the trace of $\bfD^{-1/2}\bfSigma \bfD^{-1/2}$, which is
\begin{align*}
    \operatorname{tr}[\bfD^{-1}\bfSigma ] = \sum_{i=1}^L \frac{1}{\sigma_i^2} \operatorname{tr}(\bfSigma_{ii}) = \sum_{i=1}^L \ell_i=p.
\end{align*}
Since all eigenvalues of $\bfD^{-1/2}\bfSigma \bfD^{-1/2}$ are non-negative, this implies that 
\[
\lambda_1(\bfD^{-1/2}\bfSigma \bfD^{-1/2})\ge 1 \quad \,\, \textnormal{and}\quad \,\, \lambda_p(\bfD^{-1/2}\bfSigma \bfD^{-1/2}) \le 1.
\]
Accordingly, the absolute values on the right-hand side of \eqref{e:simp:1} can be dropped. This motivates a simple estimation procedure for $\mathcal{S}(\bfSigma)$: First, create an estimator for $\bfD^{-1/2}\bfSigma \bfD^{-1/2}$. Second, use it to construct estimates of $\lambda_1(\bfD^{-1/2}\bfSigma \bfD^{-1/2}), \lambda_p(\bfD^{-1/2}\bfSigma \bfD^{-1/2})$. Third and finally, use the eigenvalue estimates for a plug-in estimate of \eqref{e:simp:1}. The first step is simple enough, since we already have the covariance estimator $\widehat{\bfSigma}_n$ and simply need to approximate $\mathbf{D}$. This can be done by extracting the diagonal blocks $\widehat{\bfSigma}_\mathbf{ii} \in \mathbb{R}^{\ell_i \times \ell_i}$ from $\widehat{\bfSigma}_n$ and defining $\hat \sigma_{i}^2 := \frac{1}{\ell_i}\tr(\widehat{\bfSigma}_\mathbf{ii})$. We can then set 
\begin{align*}
    \widehat{\mathbf{D}}:= \operatorname{diag}\bigl(
    \hat \sigma_1^2 \bfI_{\ell_1},
    \hat \sigma_2^2 \bfI_{\ell_2},
    \ldots,
    \hat \sigma_L^2 \bfI_{\ell_L}
    \bigr).
\end{align*}
Thus, we obtain the natural estimator for $\bfD^{-1/2}\bfSigma \bfD^{-1/2}$ by $\widehat{\mathbf{D}}^{-1/2}\widehat{\bfSigma}_n \widehat{\mathbf{D}}^{-1/2} $. The next step is now to approximate the extreme eigenvalues of $\bfD^{-1/2}\bfSigma \bfD^{-1/2}$. For fixed $p$, it is of course true that $\lambda_j(\bfD^{-1/2}\bfSigma \bfD^{-1/2}) = \lambda_j(\widehat{\mathbf{D}}^{-1/2}\widehat{\bfSigma}_n \widehat{\mathbf{D}}^{-1/2})+\mathcal{O}_\PR(n^{-1/2})$. However, in the proportional regime where $p \sim n$, this consistency is generally not true. Rather, there are several scenarios that depend on the limiting behavior of the spectrum, relative to the extreme eigenvalues. In this paper, we are concerned with what is sometimes called the \textit{supercritical regime}, where the matrix $\bfD^{-1/2}\bfSigma\bfD^{-1/2}$ has spiked extreme eigenvalues that are (in some sense) detached from the rest of the spectrum. In our numerical experiments, we will see some examples with supercritical eigenvalues, including the case of a panel with weak cross-sectional equicorrelations, which are a natural way to capture subtle deviations from a spherical model. A mathematical formulation of the supercritical regime is given in Assumption \ref{ass4_spikes} below. In this regime, the extreme eigenvalues of the empirical covariance are biased, but it is possible to give a bias correction, following \cite{zhang:zheng:pan:zhong:2022}. We therefore define
\begin{align*}
    \widehat{\bfSigma}_n^c &=   \widehat{\bfD}^{-1/2}\  \widehat{\bfSigma}_n  \widehat{\bfD}^{-1/2},
\end{align*}
and set
\begin{align}
    \hat\alpha_{1,n}^c & = -\lb-\frac{1-p/n}{\lambda_{1}(\hat\bfSigma_n^c)}+\frac{1}{n}\sum_{i\ne 1}\frac{1}{\lambda_{i}(\hat\bfSigma_n^c)-\lambda_{1}(\hat\bfSigma_n^c)}\rb^{-1},\label{eq_def_hat_alpha1}\\
    \hat\alpha_{p,n}^c & =-\lb-\frac{1-p/n}{\lambda_{p}(\hat\bfSigma_n^c)}+\frac{1}{n}\sum_{i\ne p}\frac{1}{\lambda_{i}(\hat\bfSigma_n^c)-\lambda_{p}(\hat\bfSigma_n^c)}\rb^{-1} . \label{eq_def_hat_alphap} 
\end{align}
These estimators are motivated by an identity linking the population eigenvalue $\lambda_k(\bfD^{-1/2}\bfSigma\bfD^{-1/2})$ to the deterministic location of the corresponding sample outlier through the companion Stieltjes transform of the bulk; see \eqref{eq_alpha_formula} for more details. Replacing the deterministic location by the observed sample eigenvalue $\lambda_k(\hat\bfSigma_n^c)$
and the Stieltjes transform by an empirical leave-one-out counterpart yields $\hat\alpha_{k,n}^c$. Related estimators appeared previously in \cite{zhang:zheng:pan:zhong:2022, lam:doernemann:dette:2025, bai:ding:2012}.\\
It is important to notice that the bias-corrected estimator of the smallest eigenvalue $\hat\alpha_{p,n}^c$ is only meaningfully defined if $p/n <1$; otherwise the smallest empirical eigenvalue is simply $0$.
In fact, our subsequent asymptotics for the smallest eigenvalue require the ratio $p/n$ to remain bounded away from $1$, which we ensure in our procedure by distinguishing the cases  $p/n \le 1-\theta$ and   $p/n>1-\theta$ for some positive constant $\theta$. In the case where  $p/n>1-\theta$, our method focuses only on the largest eigenvalue, which is formally attained by defining $\hat\alpha_{p,n}^c =\infty$ if  $p/n>1-\theta$ in the following. With this in mind, we continue presenting our discussion.\\
With the bias-corrected eigenvalue estimators in hand, we can now define the empirical version of the goodness-of-fit measure $\mathcal{S}(\boldsymbol{\Sigma})$ (following \eqref{e:simp:1}) as $\max\lb   \hat\alpha_{1,n}^c - 1    ,
       1 - \hat\alpha_{p,n}^c
    \rb $. Then, to test the hypothesis of approximate sphericity, we suggest the rescaled test statistic
    \begin{align} \label{eq_def_test_stat}
    \tilde T_n =\sqrt{n}  \max\lb   \hat\alpha_{1,n}^c - 1 - \Delta   ,
       1 - \Delta - \hat\alpha_{p,n}^c
    \rb.
\end{align}
For $c<1$, the analysis of $\tilde T_n$ depends on the asymptotic normality of the two-dimensional vector
\begin{align} \label{e:conv:joint}
\sqrt{n}
\begin{pmatrix}
    \hat\alpha_{1,n}^c - \lambda_1(\bfD^{-1/2}\bfSigma\bfD^{-1/2})\\
    \lambda_p(\bfD^{-1/2}\bfSigma\bfD^{-1/2})-\hat\alpha_{p,n}^c
\end{pmatrix}
\cond
Z=(Z_1,Z_2)^\top\sim\mathcal{N}_2(\mathbf{0},\mathbf{J}\mathbf{C}\mathbf{J}),
\end{align}
where $\mathbf{J}=\operatorname{diag}(1,-1)$ and $\mathbf{C}$ is the covariance matrix in Theorem \ref{thm_rmt}.

\begin{algorithm}[h]
\caption{Test for $\Delta$-Approximate Sphericity}
\label{alg:sphericity}
\begin{algorithmic}[1]
\Require Significance level $\alpha \in (0,1)$, threshold $\Delta > 0$, data $\mathbf{y}_1,\ldots,\mathbf{y}_n$, symmetric positive semidefinite covariance estimate $\widehat{\mathbf{C}}_n=(\widehat C_{n,ij})_{i,j=1}^2$ for the eigenvalue errors in Theorem \ref{thm_rmt}, number of Monte Carlo runs $B\in\mathbb N$, threshold $\theta \in (0,1)$
\Ensure Test decision
\State Compute $\hat{\alpha}^c_{1,n}$ (defined in \eqref{eq_def_hat_alpha1})
\If{$p/n \le 1-\theta$}
\State Compute $\hat{\alpha}^c_{p,n}$ (defined in \eqref{eq_def_hat_alphap})
\State Compute the test statistic $\tilde T_n$ (defined in \eqref{eq_def_test_stat})
\State Compute
       $A_1 := \hat{\alpha}^c_{1,n} - 1$ \quad and \quad
       $A_2 := 1 - \hat{\alpha}_{p,n}^c$
\State Set $i := \min\arg\max_{j \in \{1,2\}} A_j$ and define $I := \{i\}$
\If{$|A_1 - A_2| \le \log(n)/\sqrt{n}$}
    \State Update $I := \{1,2\}$
\EndIf
\Else
    \State Set $I := \{1\}$ and $\tilde T_n = \sqrt{n}\lb\hat\alpha_{1,n}^c - 1 - \Delta\rb$
\EndIf
\State Form $\widehat{\mathbf{C}}^{\mathrm{test}}_{n,I}:=((\mathbf{J}\widehat{\mathbf{C}}_n\mathbf{J})_{ij})_{i,j\in I}$
\State Conditionally on the data, generate independent vectors
$(Z_i^{(b)})_{i\in I}\sim\mathcal N_{|I|}(\mathbf0,\widehat{\mathbf{C}}^{\mathrm{test}}_{n,I})$, $b=1,\ldots,B$
\State Set $M_b:=\max_{i\in I}Z_i^{(b)}$ and order the values as $M_{(1)}\le\cdots\le M_{(B)}$
\State Set $q_{1-\alpha}^{B}:=M_{(\lceil B(1-\alpha)\rceil)}$
\If{$\tilde T_n > q_{1-\alpha}^B$}
    \State \Return \texttt{Reject approximate sphericity}
\Else
    \State \Return \texttt{Do not reject $\Delta$-approximate sphericity}
\EndIf
\end{algorithmic}
\end{algorithm}

The explicit form of the limiting covariance matrix $\mathbf{C}$ is given in Appendix \ref{sec_C}. For $c<1$, the convergence \eqref{e:conv:joint} implies the following asymptotic behavior of $\tilde T_n$, where different cases need to be distinguished. If $\mathcal{S}(\bfSigma) < \Delta$, the "interior of the null-hypothesis", it simply follows from \eqref{e:conv:joint} that $   \tilde T_n \overset{P}{\to} -\infty$. In the case where  $\mathcal{S}(\bfSigma) = \Delta$ we are still under the null hypothesis, but at the boundary and $ \tilde T_n $ converges to the following non-degenerate distribution
\begin{align} \label{e:boots:lim}
 \tilde T_n  \cond \max_{i\in I^\star} Z_i,
\end{align}
where $I^\star$ is defined below. Consequently, $ \tilde T_n $ asymptotically behaves like one of two normal distributions or like their maximum, depending on which extreme eigenvalue determines the spectral norm. Finally, when $\mathcal{S}(\bfSigma) > \Delta$, it follows that $   \tilde T_n \overset{P}{\to} \infty$. For $c\ge1$, the same conclusions hold with $I^\star=\{1\}$ and $Z_1\sim\mathcal N(0,C_{11})$ under Assumption \ref{ass4_spikes}(b). Jointly, these asymptotics suggest rejecting approximate sphericity whenever $\tilde T_n$ is large compared to the asymptotic quantiles of the right-hand side of \eqref{e:boots:lim}. A practically implementable test decision involves estimation of the covariance matrix $\mathbf{C}$ in Theorem \ref{thm_rmt}, recovery of the index set in \eqref{e:boots:lim} and then simulating quantiles of the limiting distribution using a Monte Carlo method, and a summary is given in Algorithm \ref{alg:sphericity}. In the next section, we turn to the formal justification of our approach.

\subsection{Theoretical guarantees}

For our mathematical analysis, some key definitions are needed.
First, let $c\in (0,\infty)$ be a constant of proportionality such that $p/n \to c$ as $n\to\infty$. This corresponds to our high-dimensional setting, where sample size and cross-sectional dimension are proportional to each other. 
Second, using the shorthand $\alpha_i := \lambda_i(\bfD^{-1/2}\bfSigma \bfD^{-1/2})$, we define the corresponding empirical spectral measure as follows:
\begin{align*}
    H_n = \frac{1}{p}\sum_{j=1}^p \delta_{\alpha_j}.
\end{align*}
$H_n$ summarizes the behavior of the bulk of eigenvalues, which influences the large-sample behavior of the extremal empirical eigenvalues.
To analytically describe the large-sample behavior, we will impose the standard assumption of $H_n$  
converging weakly to a limit called $H$ as $n\to\infty$. Third, we define for this limiting measure the function
\begin{align}\label{eq:def_phi}
    \phi(\lambda)=\lambda+c\int\frac{\lambda t}{\lambda-t}dH(t), \quad \lambda \notin \operatorname{supp}(H).
\end{align}
It turns out that the derivative of $\phi$ is important for characterizing supercritical eigenvalues. Finally, we define the set 
$I^\star$ as containing the active coordinate indices
\[
I^\star =
\begin{cases}
\{i\in\{1,2\}:a_i=\mathcal S(\bfSigma)\}, & c<1,\\
\{1\}, & c\ge1,
\end{cases}
\qquad
 a_1:=\alpha_1-1,\quad a_2:=1-\alpha_p,
\]
which identifies either the largest, the smallest or both eigenvalues (depending on whether the spikes are of equal or unequal sizes).
\begin{assumption}
\textnormal{
\begin{enumerate}[label=(A-\arabic*)]
\item \label{ass1_dim} (On the dimension) $p=p_n$ is a function of $n$ such that $p/n \to c \in (0,\infty)$ as $n\to\infty.$
\item \label{ass2_data}(On the data) Let $\bfGamma$ be a $p\times p$ matrix such that  $\bfGamma=\bfSigma^{1/2}$ and $\mathbf{Q}$ a $p\times n$ random matrix with i.i.d. entries $   q_{ij}$ satisfying 
    $\E(q_{11})=0$, $\E(q_{11}^2)=1$, and $\E(q_{11}^4)<\infty$.
The data matrix can be written as
$(\bfy_1-\mu, \ldots, \bfy_n-\mu) = \bfGamma \mathbf{Q} . $ 
\item \label{ass3_bulk}(On the bulk eigenvalues) The empirical spectral distribution  $(1/p)\sum_{i=1}^p \delta_{\alpha_i}$ converges weakly to a probability measure $H$.
\item  \label{ass4_spikes} (On the extremal eigenvalues) 
\begin{itemize}
    \item[(a)] If $c<1$, assume that $\alpha_1=\alpha_{\max}$ and $\alpha_p=\alpha_{\min}$ are fixed numbers in $(0,\infty)$, independent of $n$, and both have multiplicity one and are uniformly separated from the remaining population eigenvalues. Assume further that $\alpha_{\max},\alpha_{\min}\notin\operatorname{supp}(H)$, and that $\phi'(\alpha_{\max})>0$ and $\phi'(\alpha_{\min})>0$.
    \item[(b)] If $c\ge 1$, assume that $\alpha_1=\alpha_{\max}$ is a fixed number in $(0,\infty)$, independent of $n$, and has multiplicity one and is uniformly separated from the remaining population eigenvalues. Assume further that $\alpha_{\max}\notin\operatorname{supp}(H)$, that $\phi'(\alpha_{\max})>0$, and that $\mathcal{S}(\bfSigma) = \alpha_1-1$.
\end{itemize}
\item \label{ass5_groups} (On the group sizes) The number of groups $L \in \mathbb{N}$ does not depend on $n$. The group sizes $\ell_1,...,\ell_L$ depend on $n$ and satisfy $\ell_i/p \to c^{(i)}$ for $i=1,...,L$, where $ c^{(i)}>0$ and $\sum_{i=1}^L  c^{(i)}=1$.
\end{enumerate}
}
\end{assumption}

\begin{remark}
Assumption \ref{ass4_spikes}(a) is needed to establish
the joint central limit theorem for both corrected
extreme eigenvalue estimators. The algorithm itself,
however, uses sample eigenvalues and does not require
both population extreme eigenvalues to be simple.
In particular, a repeated population eigenvalue in
the bulk does not by itself make the corresponding
empirical estimator undefined.

The testing conclusions remain valid if only the
active extreme eigenvalue is simple and supercritical,
provided that the other coordinate is asymptotically
excluded from the selected set $I$.
For example, if $\mathcal S(\bfSigma)=\alpha_{\max}-1$,
a sufficient additional condition is that, for some
$\varepsilon>0$,
\[
\mathbb P\left(
1-\hat\alpha_{p,n}^c
\le \alpha_{\max}-1-\varepsilon
\right)\longrightarrow1.
\]
Together with consistency of $\hat\alpha_{1,n}^c$,
this condition ensures that $I=\{1\}$ with probability
tending to one. The conclusions then follow from the
univariate central limit theorem and consistency of
the variance estimator for the upper coordinate,
without requiring the smallest population eigenvalue
to be simple or supercritical.
\end{remark}

\noindent Assumption \ref{ass1_dim} implies a high-dimensional setting, with comparable sample size and dimension. \ref{ass2_data} is a standard assumption in random matrix theory, restricting the structure of higher-order dependence; it is satisfied, in particular, by multivariate normal vectors. \ref{ass3_bulk} is also standard, and implies that the bulk distribution stabilizes in our asymptotic theory (which in turn guarantees that the distribution of leading eigenvalues stabilizes). \ref{ass4_spikes} requires that the extreme eigenvalues used in the test are simple and uniformly separated from the remaining population eigenvalues, which is key for the asymptotic normality of empirical eigenvalues. Another prerequisite for that is supercriticality (large enough spikes), which is expressed by the condition on a positive derivative of $\phi$. The distinction between $c<1$ and $c \ge 1$ reflects whether the smallest empirical eigenvalue is used ($c<1$) or excluded from the procedure ($c\ge1$). In the case where $c\ge 1$, we have added the condition $\mathcal{S}(\bfSigma) = \alpha_1-1$ to make the statements of our results easier. In fact it can be dropped without affecting asymptotic level control under the null hypothesis, but then equality to $\alpha$ at every boundary point is no longer guaranteed. Under the alternative, it matters, however, when stating consistency, because a test based only on the largest eigenvalue can, of course, only detect violations of $H_0$ in the largest eigenvalue, and not in the smallest one. The last condition \ref{ass5_groups} has been verbally stated earlier and assumes that the number of groups in our asymptotic theory is fixed and all groups have comparable sizes. \\
Before turning to our main theoretical results, we present a minimal example illustrating the deviation measure.

\begin{example}
\textnormal{
As a minimal illustration, we consider an equicorrelation example. To make the presentation easier, suppose that
there is only one group, \(L=1\), and that \(\sigma^2=1\), so that
\(\mathbf D=\mathbf I_p\). Then, let
\[
    \boldsymbol{\Sigma}
    =
    \begin{pmatrix}
        1      & \rho  & \cdots & \rho \\
        \rho   & 1     & \cdots & \rho \\
        \vdots & \vdots & \ddots & \vdots \\
        \rho   & \rho  & \cdots & 1
    \end{pmatrix}
    \in \mathbb R^{p\times p}, \qquad \textnormal{where} \qquad   \rho=\rho_p:=\frac{\gamma}{p-1},
\]
for some fixed \(\gamma\ge0\). For all sufficiently large \(p\), this is a
positive definite covariance matrix, with eigenvalues 
\[
    \alpha_1=1+(p-1)\rho=1+\gamma,
    \qquad
    \alpha_2=\cdots=\alpha_p=1-\rho
    =
    1-\frac{\gamma}{p-1}.
\]
As a consequence, the empirical spectral measure \(H_n\) converges weakly to its limit
\(H=\delta_1\), and the map $\phi$ is given by
\[
    \phi(\lambda)
    =
    \lambda\left(1+\frac{c}{\lambda-1}\right),
    \qquad
    \phi'(\lambda)
    =
    1-\frac{c}{(\lambda-1)^2}.
\]
For $\gamma>0$, the upper spike is \(\alpha_1=1+\gamma\), and the supercriticality condition can be expressed through the aspect ratio $c$ as follows:
\[
    \phi'(\alpha_1)>0
    \qquad\Longleftrightarrow\qquad
    \gamma>\sqrt c.
\]
For $\gamma>0$ and $p>2$, only the largest eigenvalue is active, i.e. $I^\star = \{1\}$.  \\
Equicorrelation is a stylized model for weak but pervasive cross-sectional
dependence. Although each individual correlation satisfies
\(\rho_p\to0\), the aggregate spectral deviation from sphericity is non-vanishing for $\gamma>0$:
\[
    \mathcal S(\boldsymbol{\Sigma})
    =
    \|\boldsymbol{\Sigma}-\mathbf I_p\|
    =
    \gamma.
\]
In this particular setup, the relevant hypothesis becomes
\[
    H_0:\gamma\le \Delta
    \qquad\text{versus}\qquad
    H_1:\gamma>\Delta.
\]
The parameter \(\gamma\) (and thus the threshold $\Delta$) can be interpreted in the context of cross-sectional pooling. Suppose, for instance,
that all components have the same mean
\(\mathbb E[\mathbf{y}_{j,i}]=\mu\), and that the aim is to estimate \(\mu\). We consider the pooled average
\[
    \overline{\bar{\mathbf{y}}}
    :=
    \frac{1}{np}\sum_{j=1}^n\sum_{i=1}^p \mathbf{y}_{j,i},
\]
which has mean squared error (MSE)
\[
    Q_\gamma
    :=
    \mathbb E\bigl[(\overline{\bar{\mathbf{y}}}-\mu)^2\bigr]
    =
    \frac{1+\gamma}{np}.
\]
In the independent case \(\gamma=0\), we get the smallest possible value $  Q_0= 1/(np)$. To assess efficiency relative to the independent case, it makes sense to consider the relative increase in MSE, which is simply
\[
    \frac{Q_\gamma-Q_0}{Q_0}=\gamma.
\]
Consequently, the tolerance parameter \(\Delta\) can be interpreted as the maximum tolerable relative increase in MSE (thus $\Delta$ quantifies the efficiency loss due to cross-sectional dependence).
Similar interpretations
apply to grouped means, and to the cross-sectionally pooled slope estimators in
the regression setting considered below.
}
\end{example}

\begin{theorem}
\label{thm_main_test_sph}
Suppose that Assumptions \ref{ass1_dim}--\ref{ass5_groups} are
satisfied and that the covariance limits required in Theorem \ref{thm_rmt} exist. Let $\alpha\in(0,1)$ and $\Delta>0$ be fixed. Moreover, let
$\theta\in(0,1)$ and, if $c<1$, assume additionally that
$\theta<1-c$. Suppose that the covariance estimate
$\widehat{\mathbf C}_n$ supplied to Algorithm \ref{alg:sphericity}
is data-measurable, symmetric and positive semidefinite, and that
\[
\begin{cases}
\|\widehat{\mathbf C}_n-\mathbf C\|\xrightarrow{\mathbb P}0,
\quad \mathbf C\text{ is positive definite}, & c<1,\\[1mm]
\widehat C_{n,11}\xrightarrow{\mathbb P}C_{11}>0, & c\ge1,
\end{cases}
\]
where $\mathbf C$ and $C_{11}$ are the limiting covariances in
Theorem \ref{thm_rmt}. Let $B=B_n\to\infty$
and denote the output of Algorithm \ref{alg:sphericity} by $A$. Then
\begin{align*}
\lim_{n\to\infty}
\mathbb{P}\bigl(
A=\textnormal{\texttt{``Reject approximate sphericity''}}
\bigr)
=
\begin{cases}
0,      & \mathcal{S}(\bfSigma)<\Delta,\\
\alpha, & \mathcal{S}(\bfSigma)=\Delta,\\
1,      & \mathcal{S}(\bfSigma)>\Delta.
\end{cases}
\end{align*}
\end{theorem}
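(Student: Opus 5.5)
The plan is to take Theorem \ref{thm_rmt} (the joint CLT \eqref{e:conv:joint} for $c<1$, and its univariate version for the upper coordinate when $c\ge1$) as the probabilistic input. The theorem then follows from a continuous-mapping argument together with two facts about the algorithm: it recovers the active index set, and it produces consistent Monte Carlo quantiles.

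\textbf{Regime selection.} First we check that the algorithm's branch on $p/n$ is asymptotically deterministic. If $c<1$ and $\theta<1-c$, then $p/n\le 1-\theta$ eventually, so both coordinates are computed. If $c\ge1$, then $p/n>1-\theta$ eventually, so only the largest eigenvalue is used and $I=\{1\}$.

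\textbf{Decomposition of the statistic.} Write $a_1=\alpha_1-1$, $a_2=1-\alpha_p$ and $\mathcal S=\max(a_1,a_2)$. Then
\[
\tilde T_n=\max_{i}\bigl(\sqrt n(a_i-\mathcal S)+\sqrt n(\mathcal S-\Delta)+W_{n,i}\bigr),
\]
where $(W_{n,1},W_{n,2})$ is the vector in \eqref{e:conv:joint}, so $(W_{n,1},W_{n,2})\cond Z$. Any inactive index has $\sqrt n(a_i-\mathcal S)\to-\infty$, so it drops out of the maximum with probability tending to one. Hence $\tilde T_n=\sqrt n(\mathcal S-\Delta)+\max_{i\in I^\star}W_{n,i}+o_\PR(1)$. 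This gives three cases:
\begin{itemize}
\item If $\mathcal S<\Delta$, then $\tilde T_n\to-\infty$ in probability.
\item If $\mathcal S>\Delta$, then $\tilde T_n\to\infty$ in probability.
\item If $\mathcal S=\Delta$, then $\tilde T_n\cond\max_{i\in I^\star}Z_i$, by the continuous mapping theorem.
\end{itemize}
For $c\ge1$ the same argument applies with one coordinate, using $\mathcal S=\alpha_1-1$ from Assumption \ref{ass4_spikes}(b).

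\textbf{Recovery of the index set.} Next we show $\PR(I=I^\star)\to1$. By the CLT, $A_i=a_i+O_\PR(n^{-1/2})$. If $a_1=a_2$, then $|A_1-A_2|=O_\PR(n^{-1/2})$, which is below $\log n/\sqrt n$ with probability tending to one, so $I=\{1,2\}$. If $a_1\ne a_2$, then $|A_1-A_2|\to|a_1-a_2|>0$, so the threshold is exceeded and the argmax selects the correct index. Ties in the argmax occur with vanishing probability.

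\textbf{Quantile consistency.} On the event $\{I=I^\star\}$, the matrix $\widehat{\mathbf C}^{\mathrm{test}}_{n,I}$ converges in probability to $(\mathbf J\mathbf C\mathbf J)_{I^\star}$, which is positive definite. Let $F_n$ denote the conditional distribution function of $\max_{i\in I}Z_i^{(b)}$, and $F$ that of $\max_{i\in I^\star}Z_i$.
\begin{itemize}
\item $F$ is continuous and strictly increasing, since $F$ is the law of a maximum of a nondegenerate Gaussian vector.
\item Gaussian laws depend continuously on their covariance matrices, so $\sup_x|F_n(x)-F(x)|\to0$ in probability.
\item The conditional Glivenko--Cantelli theorem (or DKW) with $B_n\to\infty$ shows the empirical distribution of $M_1,\dots,M_B$ is uniformly close to $F_n$.
\end{itemize}
Combining these, $q^B_{1-\alpha}\to q_{1-\alpha}:=F^{-1}(1-\alpha)$ in probability.

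\textbf{Conclusion.} By Slutsky's theorem, $\PR(\tilde T_n>q^B_{1-\alpha})\to\PR(\max_{I^\star}Z_i>q_{1-\alpha})=\alpha$ at the boundary $\mathcal S=\Delta$; this uses continuity of $F$ at $q_{1-\alpha}$. In the other two cases, $q^B_{1-\alpha}$ is $O_\PR(1)$ whatever index set is selected, because $\widehat{\mathbf C}_n$ is bounded in probability. Since $\tilde T_n\to\mp\infty$, the rejection probability tends to $0$ or $1$ respectively.

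The main obstacle is the boundary case. There one must control simultaneously the data-dependent index set, the randomness of $\widehat{\mathbf C}_n$, and the Monte Carlo error. I would handle this by intersecting with $\{I=I^\star\}$ and then applying a subsequence argument: along subsequences, the convergences in probability become almost sure, which lets the conditional quantile argument go through.
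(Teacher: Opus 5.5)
Your proposal is correct and follows essentially the same route as the paper. You show that the selected index set equals $I^\star$ with probability tending to one via the $\log(n)/\sqrt n$ threshold, derive $\tilde T_n-\sqrt n\{\mathcal S(\bfSigma)-\Delta\}\cond\max_{i\in I^\star}Z_i$, and combine this with $q^B_{1-\alpha}\conp q$ and continuity and strict monotonicity of $F$; your DKW-plus-continuity argument for the simulated quantile fills in the step the paper calls standard and omits.
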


Although not covered explicitly by our assumptions, the same argument applies when $c<1$ and the procedure uses only one supercritical extreme eigenvalue, provided that this eigenvalue determines $\mathcal{S}(\bfSigma)$. In this case, the other coordinate is omitted from the statistic and the calibration.\\
The proof of the main theorem relies on the following central limit theorem for the corrected extreme eigenvalue estimators. We present it here, defer its proof to Section \ref{sec_proof_thm_rmt}, and give the definition of the covariance matrix $\mathbf{C}$ in Appendix \ref{sec_C}.
\begin{theorem} \label{thm_rmt}
Suppose that Assumptions \ref{ass1_dim}--\ref{ass5_groups} are satisfied.
Assume that the limits defining $\Omega_{11}$, $\Omega_{22}$ and $\Omega_{12}$ in \eqref{eq:def_Omega} exist when $c<1$, and that the limit defining $\Omega_{11}$ exists when $c\ge1$. 
\begin{enumerate}[label=(\alph*)]
    \item \label{thm_part_a}
    If $c<1$, then
    \begin{align*}
        \sqrt{n} \lb \hat\alpha_{1,n}^c - \alpha_{\max}, \hat\alpha_{p,n}^c - \alpha_{\min} \rb \cond \mathcal{N}_2 (\mathbf{0}, \mathbf{C}), \quad n\to\infty.
    \end{align*}
    \item \label{thm_part_b} If $c \geq 1$, then
    \begin{align*}
        \sqrt{n} \lb \hat\alpha_{1,n}^c - \alpha_{\max}\rb \cond \mathcal{N} (0, C_{11}), \quad n\to\infty.
    \end{align*}
\end{enumerate}
    
\end{theorem}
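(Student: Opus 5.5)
The plan is to reduce the statement to the spiked-eigenvalue CLT for a sample covariance matrix whose population covariance is known exactly. The only obstacle is that $\widehat\bfD$ replaces $\bfD$. Write $\bfSigma^c:=\bfD^{-1/2}\bfSigma\bfD^{-1/2}$ with eigenvalues $\alpha_1,\dots,\alpha_p$, and $\bfS_n:=\bfD^{-1/2}\widehat\bfSigma_n\bfD^{-1/2}$. Then $\widehat\bfSigma_n^c=\bfU\bfS_n\bfU$ with $\bfU:=\widehat\bfD^{-1/2}\bfD^{1/2}=\operatorname{diag}(u_1\bfI_{\ell_1},\dots,u_L\bfI_{\ell_L})$ and $u_i=\sigma_i/\hat\sigma_i$. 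By \ref{ass2_data}, $\bfS_n$ is a centred sample covariance matrix built from $\bfD^{-1/2}\bfGamma\mathbf{Q}$, whose population covariance is $\bfSigma^c$. By \ref{ass3_bulk}–\ref{ass4_spikes} this is exactly the supercritical general-bulk setting of \cite{zhang:zheng:pan:zhong:2022} (see also \cite{bai:yao:2008}). For the oracle estimators $\tilde\alpha_{k}$, defined as in \eqref{eq_def_hat_alpha1}–\eqref{eq_def_hat_alphap} but with $\bfS_n$ in place of $\widehat\bfSigma_n^c$, I would therefore derive the joint CLT of $\sqrt n(\tilde\alpha_1-\alpha_{\max},\tilde\alpha_p-\alpha_{\min})$. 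The mean centring $\overline{\mathbf y}$ is a rank-one perturbation; it shifts the outlier locations by $O_\PR(n^{-1})$ and the leave-one-out sums by $O_\PR(n^{-1})$, so it is negligible.

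\textbf{Step 1 (oracle CLT).} Let $\lambda_k^0$ solve $\lambda_k^0=\phi_n(\alpha_k)$ and $\underline m$ be the companion Stieltjes transform of the bulk. The identity \eqref{eq_alpha_formula} gives $\alpha_k=-1/\underline m(\lambda_k^0)$. First linearise $\lambda_k(\bfS_n)-\lambda_k^0$. By the determinant equation for outliers, $\sqrt n(\lambda_k(\bfS_n)-\lambda_k^0)$ is asymptotically $\phi'(\alpha_k)^{-1}$ times a quadratic form $\sqrt n\,\mathbf v_k^\top(\cdots)\mathbf v_k$ in the population eigenvector $\mathbf v_k$. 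This form is asymptotically normal, with variance involving the fourth cumulant and the limits $\Omega_{kk'}$. Second, the leave-one-out average $\frac1n\sum_{i\neq k}(\lambda_i-\lambda_k)^{-1}$, combined with $-(1-p/n)/\lambda_k$, is a consistent estimator of $\underline m(\lambda_k)$. It converges at rate $o_\PR(n^{-1/2})$ after Taylor expansion around $\lambda_k^0$, because the bulk linear statistics fluctuate at order $n^{-1}$ and the outlier sits at a distance bounded below from the bulk, by rigidity. The delta method for $\lambda\mapsto-1/\underline m(\lambda)$ then yields the oracle CLT. Joint normality of the two coordinates follows from Cramér–Wold applied to the two quadratic forms.

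\textbf{Step 2 (effect of $\widehat\bfD$).} Each $\hat\sigma_i^2/\sigma_i^2=\ell_i^{-1}\tr(\bfS_{n,ii})$ is a linear spectral-type statistic. By \ref{ass5_groups}, $u_i-1=O_\PR(n^{-1})+$ a term whose fluctuation is of order $(n\ell_i)^{-1/2}\cdot$(trace-type variance). I expect $u_i-1=O_\PR(n^{-1})$ in general, but I would not rely on this; possibly only $O_\PR(n^{-1/2})$ holds when $\bfSigma^c$ has nonnegligible off-diagonal mass in Frobenius norm. I would handle both cases by an expansion $\bfU\bfS_n\bfU=\bfS_n+\sum_i(u_i-1)(\bfP_i\bfS_n+\bfS_n\bfP_i)+O_\PR(\max_i|u_i-1|^2)$, with $\bfP_i$ the group projections. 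By first-order perturbation of isolated eigenvalues, $\lambda_k$ moves by $2\sum_i(u_i-1)\,\hat{\mathbf v}_k^\top\bfP_i\bfS_n\hat{\mathbf v}_k$, which gives an additional Gaussian contribution jointly with Step 1. The Stieltjes correction moves by the same order, in a deterministic way. This is presumably why $\mathbf C$ involves the $\Omega$'s from the group structure. I then collect everything into a single vector of quadratic forms in $\mathbf Q$ and apply a CLT for such forms (e.g. the Bai–Yao quadratic-form CLT), which produces $\mathbf C$.

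\textbf{Expected main obstacle.} The hard part is Step 2 combined with the second part of Step 1. I must show that the randomly rescaled matrix keeps the outliers separated, that $\frac1n\sum_{i\neq k}$ computed from $\widehat\bfSigma^c_n$ differs from its oracle counterpart by exactly the predicted first-order term, and that all cross terms are $o_\PR(n^{-1/2})$. I would attempt this via resolvent comparison on a contour enclosing the bulk but not the outlier. Case (b), where $c\ge1$, follows identically using only $k=1$; the zero eigenvalues enter the leave-one-out sum harmlessly, since $\lambda_1$ stays away from them.
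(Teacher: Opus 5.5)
Your Step 1 is essentially the paper's route. It combines the outlier CLT of \cite{zhang:zheng:pan:zhong:2022} for the target sample eigenvalue, the leave-one-out companion-Stieltjes term being accurate to $o_\PR(n^{-1/2})$ because the bulk enters only through linear statistics of size $O_\PR(n^{-1})$ on a fixed spectral gap, and the delta method. There is one small slip: with $H_n$ containing $\alpha_k$, $\phi_n(\alpha_k)$ has a pole. This is why the paper centres at $\phi_{n,k}(\alpha_k)$, built from $H^{(0)}_{n,k}$ with the target spike replaced by $0$.

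The genuine gap is Step 2. You leave open whether $u_i-1=O_\PR(n^{-1})$. For the case $u_i-1\asymp n^{-1/2}$ you predict an additional Gaussian term from $\widehat\bfD$, which you take to be ``why $\mathbf C$ involves the $\Omega$'s from the group structure''. That is incompatible with the statement. The matrix $\mathbf C=\operatorname{diag}(\alpha_{\max}^2\tau_{\max},\alpha_{\min}^2\tau_{\min})\,\boldsymbol\Omega\,\operatorname{diag}(\alpha_{\max}^2\tau_{\max},\alpha_{\min}^2\tau_{\min})$ is exactly the oracle covariance, with no contribution from estimating $\bfD$. The non-Gaussian part $(\gamma_4-3)\sum_i U_{ik}^2U_{il}^2$ of $\boldsymbol\Omega$ comes from the fourth cumulant and the population eigenvectors in the outlier CLT. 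So your fallback would establish a different limit. Your plan reaches $\mathbf C$ only if the extra term vanishes, which is precisely the bound you did not prove.

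The missing step is an elementary variance bound showing that the pessimistic case never occurs. Write $\bfz_j=\bfD^{-1/2}(\bfy_j-\mu)=\bfA\bfq_j$ with $\bfA=\bfD^{-1/2}\bfGamma$. Let $\mathbf M_i=\bfA^\top\bfP_i^\top\bfP_i\bfA$, where $\bfP_i$ selects the coordinates of group $i$. Then $\tr\mathbf M_i=\ell_i$ and $\|\mathbf M_i\|\le\|\bfA\|^2=\alpha_{\max}$, so
$\Var(\bfq_j^\top\mathbf M_i\bfq_j)=2\tr(\mathbf M_i^2)+(\gamma_4-3)\sum_r(\mathbf M_i)_{rr}^2\le(2+|\gamma_4-3|)\,\alpha_{\max}\ell_i$.
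Averaging over $n$ independent columns and dividing by $\ell_i$ gives fluctuations of order $(n\ell_i)^{-1/2}\asymp n^{-1}$. The centring term $\|\bfP_i\bar\bfz\|^2/\ell_i$ is non-negative with mean $1/n$. Hence $\max_i|\hat\sigma_i^2/\sigma_i^2-1|=O_\PR(n^{-1})$, whatever the off-diagonal structure. An $n^{-1/2}$ rate would need $\tr(\mathbf M_i^2)$ of order $\ell_i^2$, which a bounded operator norm excludes.

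Weyl's inequality together with $\|\bfS_n\|=O_\PR(1)$ then moves every eigenvalue by $O_\PR(n^{-1})$. Since the target eigenvalue stays separated from the others and from $0$, both terms in \eqref{eq_def_hat_alpha1}--\eqref{eq_def_hat_alphap} change by $O_\PR(n^{-1})=o_\PR(n^{-1/2})$; this is Lemma \ref{lem:alpha_hat-alpha_hat_c}. With this lemma, Step 2 as you designed it is unnecessary: the first-order perturbation expansion, the joint quadratic-form CLT for the $\widehat\bfD$ contribution and the contour comparison all drop out. The theorem is then Step 1 plus this reduction.
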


The Gaussian approximation above can also be used to construct asymptotically valid confidence intervals for $\mathcal{S}(\bfSigma)$, using lower and upper quantiles of the limiting distribution simulated in Algorithm \ref{alg:sphericity}.

\subsection{Extension to linear regression models}

In the previous section, we have focused on the standard high-dimensional model, where vectors (with a certain hypothesized approximate group structure) are tested for an approximately groupwise spherical covariance matrix. In this section, we discuss a statistical application to regression panels, which play an important role in econometrics and beyond. Consider the following panel regression model
\[
y_{i,t}= \gamma_i+\bfx_{i,t}'\bfbeta+\varepsilon_{i,t}, \quad i=1,...,p, \,\, t=1,...,n
\]
where $\bfbeta \in \R^k$ is the slope vector (identical across all individuals), $i$ the cross-sectional component, $\gamma_i$ the individual-specific intercept and $t$ the time component. The cross-section consists of $p$ individuals and there are a total of $n$ time points. Assuming (groupwise) sphericity for the model errors $\varepsilon_{i,t}$ is common in panel applications, and the validity of subsequent inference procedures frequently hinges on it. Nevertheless, it is typically clear that the assumption does not hold exactly in practice. This makes panel models a natural setting for testing approximate sphericity. 
To recover the residuals in the panel model, we first use an empirical centering in each component
\[
\tilde y_{i,t}= (\bfx_{i,t} - \bar \bfx_i)'\bfbeta+\varepsilon_{i,t}-\bar \varepsilon_i, \quad i=1,...,p, \,\, t=1,...,n,
\]
and stacking all the individuals at time $t$ gives us the vectorized version
\[
\tilde \bfy_{t} = (\bfX_{t} - \bar \bfX)'\bfbeta+(\boldsymbol\varepsilon_{t}-\bar {\boldsymbol\varepsilon}).
\]
More precisely, this gives us $p$-dimensional vectors $\tilde \bfy_{t}$ and $\boldsymbol\varepsilon_{t}-\bar {\boldsymbol\varepsilon}$, and $k\times p$-dimensional matrices $\bfX_{t} - \bar \bfX$.

\begin{assumption}
\textnormal{
\begin{enumerate}[label=(AP-\arabic*)]
\item \label{ass1_panel_dim} $p=p_n$ increases with $n$ such that $p/n \to c \in (0,\infty)$ as $n\to\infty$, while $k$ remains fixed.
\item \label{ass2_panel_data}
The regressor matrices admit the decomposition
\[
    \bfX_t^\top=\mathbf M_n+\bfG^{1/2}\mathbf Z_t,
    \qquad t=1,\ldots,n,
\]
where $\mathbf M_n\in\mathbb R^{p\times k}$ is deterministic
and $\bfG$ is a deterministic positive semidefinite
$p\times p$ matrix satisfying $\sup_n\|\bfG\|<\infty$.
The entries of the $p\times k$ matrices $\mathbf Z_t$
are i.i.d., with mean zero,
unit variance, and uniformly bounded finite
fourth moment.
\item \label{ass3_estimator_beta} Vectors $\boldsymbol{\varepsilon}_t$ are i.i.d., with covariance matrix $\bfSigma^{\varepsilon}$ satisfying $\sup_n\|\bfSigma^{\varepsilon}\|<\infty$, and admit the decomposition $\boldsymbol{\varepsilon}_t-\E(\boldsymbol{\varepsilon}_1)=(\bfSigma^{\varepsilon})^{1/2}\mathbf{v}_t$. The elements of $\mathbf{v}_t$ are i.i.d. across entries and time points, centered, with unit variance and uniformly bounded fourth moments.
\item \label{ass3_panel_noise} The estimator $\hat{\bfbeta}$ satisfies $\|\bfbeta-\hat\bfbeta\|=o_\PR(n
^{-1/2})$. 
\end{enumerate}
}
\end{assumption}
Assumption \ref{ass1_panel_dim} states that we are in a panel model where the cross-sectional dimension is non-negligible compared to the number of time points. The constant of proportionality may even be much larger than $1$, describing panels with large cross-section. This is particularly relevant in econometric applications, where the number of individuals often exceeds the number of time observations \citep{hansen:2007, pesaran:yamagata:2008}. The separability and moment assumptions \ref{ass2_panel_data} and \ref{ass3_estimator_beta} are standard in random matrix theory. Finally, Assumption \ref{ass3_panel_noise} is a consistency result for a slope estimator $\hat \bfbeta$ that pools all observed data. It is well-known that under certain standard assumptions, including exogeneity and (groupwise-)sphericity, one can prove the much stronger result $\|\bfbeta-\hat\bfbeta\|=\mathcal{O}_\PR((np)
^{-1/2})=\mathcal{O}_\PR(n
^{-1})$. See \citet[Theorem~3]{hansen:2007} and \citet[equation~(3)]{stock:watson:2008}. 
We do not impose this stronger result, first because our theory does not require it, and second (and more importantly), because it is easy to see that under stronger cross-sectional dependence, a rate of $\mathcal{O}_\PR(n
^{-1})$ may become unattainable, while $o_\PR(n
^{-1/2})$ may still hold. Since proving specific consistency results with rates is not our focus here, we have only imposed the high-level assumption \ref{ass3_panel_noise}.\\
If the true value of $\bfbeta$ were known, we could simply retrieve the centered errors $\boldsymbol\varepsilon_t-\bar{\boldsymbol\varepsilon}$, compute the empirical covariance of the model errors
\[
\hat{\bfSigma}^{\varepsilon}=\frac{1}{n}\sum_{t=1}^n \big[\boldsymbol\varepsilon_{t}-\bar{\boldsymbol\varepsilon}\big]\big[\boldsymbol\varepsilon_{t}-\bar{\boldsymbol\varepsilon}\big]'
\]
and apply the theory from the previous section. Since only $\hat \bfbeta$ is known, we only have the empirical covariance of the empirical residuals at our disposal, i.e.
\begin{align}\label{eq:est_cov}
\hat \bfSigma = \frac{1}{n}\sum_{t=1}^n \big[\tilde \bfy_{t}-(\bfX_{t} - \bar \bfX)'\hat \bfbeta\big]\big[ \tilde \bfy_{t} -(\bfX_{t} - \bar \bfX)'\hat \bfbeta \big]'.  
\end{align} 
Yet, if the estimate $\hat \bfbeta$ for $\bfbeta$ is good enough, we have a fairly clear proof strategy. We compare the empirical covariance of the residuals, $\hat \bfSigma$, with the empirical covariance of the true errors, $\hat{\bfSigma}^{\varepsilon}$, and show that their distance in operator norm is $o_\PR(n^{-1/2})$. This also gives us the same approximation rate for their eigenvalues. Indeed, Weyl's inequality yields
\begin{align*}
    \max_{1\le j\le p}
    \big|\lambda_j(\hat{\bfSigma})-\lambda_j(\hat{\bfSigma}^{\varepsilon})\big|
    \leq \|\hat{\bfSigma}-\hat{\bfSigma}^{\varepsilon}\|
    =o_\PR(n^{-1/2}).
\end{align*}
Thus, the effect of estimating the regression slope on the empirical eigenvalues is negligible at the $n^{-1/2}$ scale used in our asymptotic results. This provides the key approximation for transferring the theory from the previous section to the empirical residuals. The following proposition establishes the required operator norm bound.
\begin{proposition} Under Assumptions \ref{ass1_panel_dim}--\ref{ass3_panel_noise}, it follows that
    \begin{align*}
        \|\hat \bfSigma - \hat{\bfSigma}^{\varepsilon}\|=o_\PR(n^{-1/2})
    \end{align*}
\end{proposition}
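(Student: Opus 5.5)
Write $\mathbf{W}_t := \bfX_t - \bar\bfX \in \mathbb{R}^{k\times p}$, $\mathbf{e}_t := \boldsymbol\varepsilon_t-\bar{\boldsymbol\varepsilon}$ and $\mathbf{d} := \bfbeta-\hat\bfbeta\in\R^k$. The residual vector is then
\[
\tilde\bfy_t - \mathbf{W}_t^\top\hat\bfbeta = \mathbf{e}_t + \mathbf{W}_t^\top \mathbf{d}.
\]
Expanding \eqref{eq:est_cov} gives
\[
\hat\bfSigma - \hat\bfSigma^{\varepsilon} = \frac1n\sum_{t=1}^n \Bigl(\mathbf{e}_t \mathbf{d}^\top \mathbf{W}_t + \mathbf{W}_t^\top \mathbf{d}\,\mathbf{e}_t^\top\Bigr) + \frac1n\sum_{t=1}^n \mathbf{W}_t^\top \mathbf{d}\,\mathbf{d}^\top \mathbf{W}_t .
\]
It is convenient to collect the columns into $p\times n$ matrices $\mathbf{E} = (\mathbf{e}_1,\dots,\mathbf{e}_n)$ and $\mathbf{R}=(\mathbf{W}_1^\top\mathbf{d},\dots,\mathbf{W}_n^\top\mathbf{d})$. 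Then the difference equals $n^{-1}(\mathbf{E}\mathbf{R}^\top+\mathbf{R}\mathbf{E}^\top+\mathbf{R}\mathbf{R}^\top)$, and
\[
\|\hat\bfSigma - \hat\bfSigma^{\varepsilon}\| \le 2\,\frac{\|\mathbf{E}\|}{\sqrt n}\,\frac{\|\mathbf{R}\|}{\sqrt n} + \frac{\|\mathbf{R}\|^2}{n}.
\]
So it suffices to show $\|\mathbf{E}\|/\sqrt n = \mathcal{O}_\PR(1)$ and $\|\mathbf{R}\|/\sqrt n = o_\PR(n^{-1/2})$.

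\textbf{The error matrix.} By \ref{ass3_estimator_beta}, $\mathbf{E} = (\bfSigma^\varepsilon)^{1/2}\mathbf{V}(\bfI_n - n^{-1}\mathbf{1}\mathbf{1}^\top)$, where $\mathbf{V}$ is $p\times n$ with i.i.d. standardized entries having finite fourth moment. The centering projection has norm one and $\sup_n\|\bfSigma^\varepsilon\|<\infty$. The Bai--Yin bound (finite fourth moment, $p/n\to c$) gives $\|\mathbf{V}\|/\sqrt n \to (1+\sqrt c)$ almost surely. Hence $\|\mathbf{E}\|/\sqrt n = \mathcal{O}_\PR(1)$.

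\textbf{The regressor matrix.} Write $\mathbf{R} = \sum_{j=1}^k d_j \mathbf{R}^{(j)}$, where $\mathbf{R}^{(j)}$ is the $p\times n$ matrix whose $t$-th column is the $j$-th column of $\mathbf{W}_t^\top$. This gives $\|\mathbf{R}\|\le \|\mathbf{d}\|\sqrt{k}\max_j \|\mathbf{R}^{(j)}\|$. By \ref{ass2_panel_data}, $\mathbf{W}_t^\top = \bfG^{1/2}(\mathbf{Z}_t-\bar{\mathbf{Z}})$, because the deterministic mean $\mathbf{M}_n$ cancels under centering. This cancellation is the key point: otherwise $\mathbf{M}_n$ could have operator norm as large as $\sqrt{p}$. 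So $\mathbf{R}^{(j)} = \bfG^{1/2}\mathbf{Z}^{(j)}(\bfI_n - n^{-1}\mathbf{1}\mathbf{1}^\top)$, where $\mathbf{Z}^{(j)}$ collects the $j$-th columns of the $\mathbf{Z}_t$ and is a $p\times n$ matrix with i.i.d. standardized entries with bounded fourth moments. Bai--Yin again gives $\|\mathbf{R}^{(j)}\|/\sqrt n = \mathcal{O}_\PR(1)$. Since $k$ is fixed (\ref{ass1_panel_dim}), $\|\mathbf{R}\|/\sqrt n = \mathcal{O}_\PR(\|\mathbf{d}\|) = o_\PR(n^{-1/2})$ by \ref{ass3_panel_noise}.

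\textbf{Conclusion.} Substituting these bounds gives $\|\hat\bfSigma - \hat\bfSigma^{\varepsilon}\| = \mathcal{O}_\PR(1)\,o_\PR(n^{-1/2}) + o_\PR(n^{-1}) = o_\PR(n^{-1/2})$. The argument does not require independence between $\hat\bfbeta$ and the data, because the bound is deterministic given the operator norms. The main technical step is the uniform operator-norm control of the random matrices. The theorem is stated for each $n$ with entry distributions possibly varying in $n$ under only uniformly bounded fourth moments, so if a triangular-array version of Bai--Yin is needed I would use a truncation argument. A cruder alternative is the Frobenius bound $\|\mathbf{R}^{(j)}\|^2\le \operatorname{tr}(\mathbf{R}^{(j)}\mathbf{R}^{(j)\top}) = \mathcal{O}_\PR(pn)$, but this only yields $\|\mathbf{R}\|/\sqrt n = \mathcal{O}_\PR(\sqrt p\,\|\mathbf{d}\|)$, which is insufficient. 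The spectral-norm bound of order $\sqrt n$ is therefore essential, and I would invoke it in the form $\E\|\mathbf{Z}^{(j)}\| \lesssim \sqrt n + \sqrt p$ for matrices with independent entries (e.g. Lata\l a's inequality with fourth moments).
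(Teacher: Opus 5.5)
Your proof is correct and follows essentially the same route as the paper. Both arguments bound the cross terms and the quadratic term by products of operator norms. Both use $\|\mathbf E\|=\mathcal O_\PR(\sqrt n)$, the cancellation of $\mathbf M_n$ under time-centering, and an $\mathcal O_\PR(\sqrt n)$ operator-norm bound for the centered regressors, together with $\|\bfbeta-\hat\bfbeta\|=o_\PR(n^{-1/2})$.

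The only difference is cosmetic. The paper factors $\mathbf R=\mathcal X'(\bfI_n\otimes(\bfbeta-\hat\bfbeta))$ and controls $\|\mathcal X\|$ through the single $p\times nk$ matrix $\mathcal Z$, whereas you split $\mathbf R$ over the $k$ regressors. Your Lata\l a-based remark also covers the triangular-array setting (entry distributions varying with $n$, only uniformly bounded fourth moments), which the paper does not address.
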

The proof is relatively short, so we provide it here.
\begin{proof}
    We start by decomposing the r.h.s. of \eqref{eq:est_cov} as $\hat \bfSigma =  \hat{\bfSigma}^{\varepsilon} + \hat \bfSigma_2 +\hat \bfSigma_3+\hat \bfSigma_4$, where
\begin{align*}
    \hat \bfSigma_2 := &\frac{1}{n}\sum_{t=1}^n \big[ (\bfX_{t} - \bar \bfX)'(\bfbeta-\hat \bfbeta)\big]\big[\boldsymbol\varepsilon_{t}-\bar {\boldsymbol\varepsilon}\big]'\\
    \hat \bfSigma_3 := &\frac{1}{n}\sum_{t=1}^n \big[\boldsymbol\varepsilon_{t}-\bar {\boldsymbol\varepsilon} \big]\big[ (\bfX_{t} - \bar \bfX)'(\bfbeta-\hat \bfbeta)\big]'\\
    \hat \bfSigma_4 := &\frac{1}{n}\sum_{t=1}^n \big[ (\bfX_{t} - \bar \bfX)'(\bfbeta-\hat \bfbeta)\big]\big[ (\bfX_{t} - \bar \bfX)'(\bfbeta-\hat \bfbeta)\big]'.
\end{align*}
Since $\|\hat\bfSigma-\hat\bfSigma^{\varepsilon}\|\leq\sum_{i=2}^4\|\hat \bfSigma_i\|$, it is sufficient to show that $\|\hat \bfSigma_i\|=o_\PR(1/\sqrt{n}), i=2,3,4$. 
We start with $i=2$:
\begin{align*}
    \hat \bfSigma_2 = &\frac{1}{n}\sum_{t=1}^n \big[ (\bfX_{t} - \bar \bfX)'(\bfbeta-\hat \bfbeta)\big]\big[\boldsymbol\varepsilon_{t}-\bar {\boldsymbol\varepsilon}\big]'
\end{align*}
We rewrite $\hat{\bfSigma}_2$ as the product of three matrices:
\begin{align}
    \hat\bfSigma_2=\frac{1}{n}\mathcal{X}'(\bfI_n\otimes (\bfbeta-\hat\bfbeta))\mathcal{E}',
\end{align}
where 
\begin{align*}
    &\mathcal{X}=\begin{pmatrix}
\bfX_1-\bar \bfX\\
\ldots\\
\bfX_n-\bar \bfX
    \end{pmatrix} \in \R^{ (nk) \times p},
 \qquad \bfI_n\otimes (\bfbeta-\hat\bfbeta)=\begin{pmatrix}
        \bfbeta-\hat\bfbeta & \boldsymbol{0}_k&\ldots&\boldsymbol{0}_k\\
        \boldsymbol{0}_k&\ldots&\ldots&\boldsymbol{0}_k\\
        \boldsymbol{0}_k&\boldsymbol{0}_k&\ldots &\bfbeta-\hat\bfbeta
    \end{pmatrix} \in \R^{(nk)\times n}
   , \\
    &\mathcal{E}=\begin{pmatrix}
        \boldsymbol\varepsilon_{1}-\bar {\boldsymbol\varepsilon},&\ldots,&\boldsymbol\varepsilon_{n}-\bar {\boldsymbol\varepsilon}
    \end{pmatrix}
    \in \R^{p\times n}.
\end{align*}
Then due to submultiplicativity of the spectral norm
\begin{align} \label{eq_sigma2_decomp}
    \|\hat\bfSigma_2\|\leq\frac{1}{n}\|\mathcal{X}\|\|I_n\otimes (\bfbeta-\hat\bfbeta)\|\|\mathcal{E}\|.
\end{align}
We analyze each of the factors on the r.h.s. separately.  First,
\begin{align*}
    \|\bfI_n\otimes (\bfbeta-\hat\bfbeta)\|=\sqrt{\|(\bfI_n\otimes (\bfbeta-\hat\bfbeta))'(\bfI_n\otimes (\bfbeta-\hat\bfbeta))\|}=\sqrt{\|\|\bfbeta-\hat\bfbeta\|^2\bfI_n\|}=o_\PR(1/\sqrt{n}).
\end{align*}

Due to Assumptions~\ref{ass1_panel_dim} and~\ref{ass3_estimator_beta}, the matrix $\frac{1}{n}\mathcal{E}\mathcal{E}'$ has bounded spectral norm in probability as $n\rightarrow\infty$, i.e. $\|\mathcal{E}\mathcal{E}'\|=\mathcal{O}_\PR(n)$, see Appendix~\ref{sec:app_rmt}. From this follows
\begin{align*}
    \|\mathcal{E}\|=\mathcal{O}_\PR(\sqrt{n}).
\end{align*}

   By Assumption \ref{ass2_panel_data},
$\bfX_t'=\mathbf M_n+\bfG^{1/2}\mathbf Z_t$.
Hence, writing
$\bar{\mathbf Z}=n^{-1}\sum_{t=1}^n\mathbf Z_t$,
we obtain
\[
    (\bfX_t-\bar{\bfX})'
    =\bfG^{1/2}(\mathbf Z_t-\bar{\mathbf Z}).
\]
Consequently,
\begin{align*}
    \mathcal X'
    =\bfG^{1/2}
      (\mathbf Z_1-\bar{\mathbf Z},\ldots,
       \mathbf Z_n-\bar{\mathbf Z})=\bfG^{1/2}\mathcal Z\Phi,
\end{align*}
where
$\mathcal Z=(\mathbf Z_1,\ldots,\mathbf Z_n)
\in\mathbb R^{p\times(nk)}$ and
\[
    \Phi
    =\left(\bfI_n-\frac1n\mathbf1_n\mathbf1_n'\right)
      \otimes\bfI_k.
\]
Here $\mathbf1_n$ denotes the vector of $n$ ones.
The matrix $\Phi$ centers each regressor separately
across the $n$ time points and is a deterministic
orthogonal projection. Thus, using $\Phi\Phi'=\Phi$,
we obtain
\begin{align}
    \mathcal X'\mathcal X
    =\bfG^{1/2}\mathcal Z\Phi\mathcal Z'\bfG^{1/2}.
\end{align} 
Since $\|\Phi\|=1$, we have 
$\|\mathcal{X}'\mathcal{X}\| \leq \|\mathbf{G}\| \|\mathcal{Z} \|^2 =
\mathcal{O}_\PR(n)$.
Combining these bounds with \eqref{eq_sigma2_decomp}, we obtain
\begin{align}
\|\hat\bfSigma_2\|\le n^{-1}\mathcal{O}_\PR(n^{1/2}) o_\PR(n^{-1/2})  \mathcal{O}_\PR(n^{1/2}) =o_\PR(n^{-1/2})   .
\end{align}
Since $\hat{\bfSigma}_2 = \hat{\bfSigma}_3'$, the bound for $\|\hat{\bfSigma}_3\| = \| \hat{\bfSigma}_2 \|$ follows. 
In order to bound $\|\hat{\bfSigma}_4\|$, we use the notation from above and write
\begin{align}
    \hat\bfSigma_4=\frac{1}{n}\mathcal{X}'(\bfI_n\otimes (\bfbeta-\hat\bfbeta))(\bfI_n\otimes (\bfbeta-\hat\bfbeta))'\mathcal{X}.
\end{align}
Then we have immediately
\begin{align}
    \|\hat\bfSigma_4\|\leq\frac{1}{n}\|\mathcal{X}\|^2\|(\bfI_n\otimes (\bfbeta-\hat\bfbeta))\|^2=o_\PR(1/n).
\end{align}

\end{proof}

To apply our testing procedure to regression residuals,
suppose additionally that the errors satisfy
Assumptions \ref{ass1_dim}--\ref{ass5_groups} and that
their group-average variances are uniformly bounded
away from zero.
Under these conditions, the preceding approximation
remains valid after empirical groupwise normalization.
Weyl's inequality and the separated-eigenvalue argument
in Lemma \ref{lem:alpha_hat-alpha_hat_c} then imply that
the corrected extreme eigenvalue estimators computed
from residuals and from true errors differ by
$o_\PR(n^{-1/2})$.
The conclusion of Theorem \ref{thm_main_test_sph}
therefore extends to residuals under its remaining
conditions, provided that the covariance estimator
used for calibration satisfies its consistency
assumption.

\section{Simulation study}\label{sec:exp}
We investigate the finite-sample properties of our new procedure. For this reason, we consider empirical rejection probabilities across four different
scenarios. Throughout, we use $p=n/2$, with
$n\in\{100,200,500,1000\}$, and generate $1{,}000$ independent Monte Carlo
replications for each setting. We fix the relevant threshold at $\Delta=2$.
In all scenarios, we construct covariance matrices such that $\mathcal{S}(\Sigma)=b$, with $b$ a parameter that we vary between $1.6$ and $3.4$ to generate data under the null hypothesis and under the alternative. In our earlier terminology,
 $b<2$ corresponds to the interior of the null hypothesis (we expect rejection rates lower than $\alpha$), $b=2$
to its boundary (we expect rejection rates approximating $\alpha$), and $b>2$ to the alternative (we expect high rejection rates corresponding to power). We now describe the models under consideration. Models MI--MIII contain
one group, while MIV contains two groups.
\begin{itemize}
\item[]\textit{MI:}
We generate i.i.d. Gaussian observations
$\mathbf{y}_i\sim\mathcal{N}_p(0,\Sigma_{\mathrm{I}}(b))$.
The covariance matrix is diagonal, with first entry $1+b$ and all
remaining entries equal to $1-b/(p-1)$.  Consequently,
$D=I_p$ and it is easy to calculate that $\mathcal{S}(\Sigma_{\mathrm{I}}(b))=b$ holds exactly.
\item[]\textit{MII:}
Let $F_{\mathrm{B}}$ denote the distribution function of the
$\operatorname{Beta}(2,2)$ distribution and set
$q_{j,p}=F_{\mathrm{B}}^{-1}((j-3/2)/(p-1))$ for $j=2,\ldots,p$.
We define a diagonal matrix $\Lambda_b$ with first entry $(1+b)/2$
and remaining entries $(1-b/(p-1))q_{j,p}$.
The covariance matrix is $\Sigma_{\mathrm{II}}(b)=Q\Lambda_bQ^\top$,
where $Q$ is drawn from the Haar distribution on the orthogonal group
once for each $p$ and held fixed across parameter values and
replications. Observations are generated as
$\mathbf{y}_i=Q\Lambda_b^{1/2}Q^\top\mathbf{z}_i$, where the entries
of $\mathbf{z}_i$ are independent copies of $\sqrt{3/5}\,\xi$ with
$\xi\sim t_5$. These entries have mean zero and variance one, but are heavy-tailed.
By symmetry, the average of the $q_{j,p}$ is exactly $1/2$,
so $D=I_p/2$. The largest normalized eigenvalue is $1+b$,
and we obtain $\mathcal{S}(\Sigma_{\mathrm{II}}(b))=b$.
\item[]\textit{MIII:}
We generate i.i.d. Gaussian observations with covariance
$\Sigma_{\mathrm{III}}(b)=(1-\rho_p)I_p+
\rho_p\mathbf{1}_p\mathbf{1}_p^\top$, where $\rho_p=b/(p-1)$ (this is the equicorrelation case).
All components have variance one and all pairwise correlations
equal $\rho_p$. Thus, $D=I_p$, and the eigenvalues are $1+b$
and $1-b/(p-1)$, the latter with multiplicity $p-1$.
It follows that $\mathcal{S}(\Sigma_{\mathrm{III}}(b))=b$.
\item[]\textit{MIV:} We consider a two-group scenario. We therefore
 divide the components into two groups of sizes
$\ell_1=\lfloor p/2\rfloor$ and $\ell_2=p-\ell_1$.
The observations are Gaussian with a diagonal covariance matrix.
In the first group, the first variance is $\frac{2}{3}(1+b)$
and the remaining $\ell_1-1$ variances are
$\frac{2}{3}(1-b/(\ell_1-1))$. All variances in the second group
equal $1/3$. The group averages are therefore exactly $2/3$ and
$1/3$, giving $D=\operatorname{diag}(\frac{2}{3}I_{\ell_1},
\frac{1}{3}I_{\ell_2})$.
After variance normalization, the eigenvalues are $1+b$,
$1-b/(\ell_1-1)$ with multiplicity $\ell_1-1$, and one with
multiplicity $\ell_2$. Thus, $\mathcal{S}(\Sigma_{\mathrm{IV}}(b))=b$.
\end{itemize}
Throughout this section, we focus on the test based on the largest eigenvalue only (which determines $\mathcal{S}(\Sigma)$ in all models).
For limiting variance estimation, we use $\hat C_{11}=2(\hat{\alpha}_{1,n}^{c})^{4}\hat{\tau}_{\max}$,
with $\hat{\alpha}_{1,n}^{c}$ defined in
\eqref{eq_def_hat_alpha1} and $\hat{\tau}_{\max}$ defined in
Section~\ref{sec_C}. \\
Table~\ref{tab:simulation_boundary} reports rejection rates at
$b=\Delta$ for nominal levels $5\%$ and $10\%$.
Across all Gaussian scenarios MI, MIII and MIV, our test is slightly conservative, particularly for the smaller sample sizes
$n=100$ and $n=200$; level approximation improves for larger $n$.
MII is a scenario with quite heavy-tailed data. As may be expected, the test somewhat exceeds both nominal levels, though the approximation remains reasonable.
\begin{table}[htbp]
\centering
\caption{Empirical rejection probabilities at the null boundary
$b=\Delta=2$, based on $1{,}000$ replications.}
\label{tab:simulation_boundary}
\begin{tabular}{rrccccc}
\toprule
$n$ & $p$ & $\alpha$ & MI & MII & MIII & MIV \\
\midrule
100  & 50  & 0.05 & 0.034 & 0.060 & 0.020 & 0.026 \\
100  & 50  & 0.10 & 0.072 & 0.134 & 0.065 & 0.062 \\
\midrule
200  & 100 & 0.05 & 0.028 & 0.060 & 0.024 & 0.025 \\
200  & 100 & 0.10 & 0.079 & 0.108 & 0.075 & 0.072 \\
\midrule
500  & 250 & 0.05 & 0.040 & 0.061 & 0.043 & 0.040 \\
500  & 250 & 0.10 & 0.089 & 0.123 & 0.092 & 0.089 \\
\midrule
1000 & 500 & 0.05 & 0.039 & 0.069 & 0.037 & 0.043 \\
1000 & 500 & 0.10 & 0.089 & 0.127 & 0.080 & 0.086 \\
\bottomrule
\end{tabular}
\end{table}
$ $\\
Next, we consider power properties. Figure~\ref{fig:simulation_power} shows rejection frequencies
over a grid of $b$-values at the nominal $5\%$ level, with steps of size $0.1$. The boundary of the null hypothesis is marked by a vertical gray line, and the nominal level by a horizontal gray line. Values to the right of the vertical gray line reflect power.
As may be expected, larger samples yield more power with steeper power curves. For $n=1000$, power is close to one by
$b=2.7$ in all four scenarios, while for $n=100$ it is around
$0.8$ at $b=3.4$. Rejection rates at the boundary correspond to those reported in Table~\ref{tab:simulation_boundary} above. Rejection rates to the left of the boundary ($b<2$) reflect the interior of the null hypothesis, where we expect, and indeed observe, rejection rates below $\alpha=5\%$. The rejection curves appear to cross approximately at the boundary $b=\Delta$, with larger samples leading to fewer rejections in the interior of the null hypothesis. This agrees with Theorem~\ref{thm_main_test_sph}, which predicts a limiting rejection probability of zero in the interior and $\alpha$ at the boundary.

\begin{figure}[htbp]
\centering
\includegraphics[width=\textwidth]{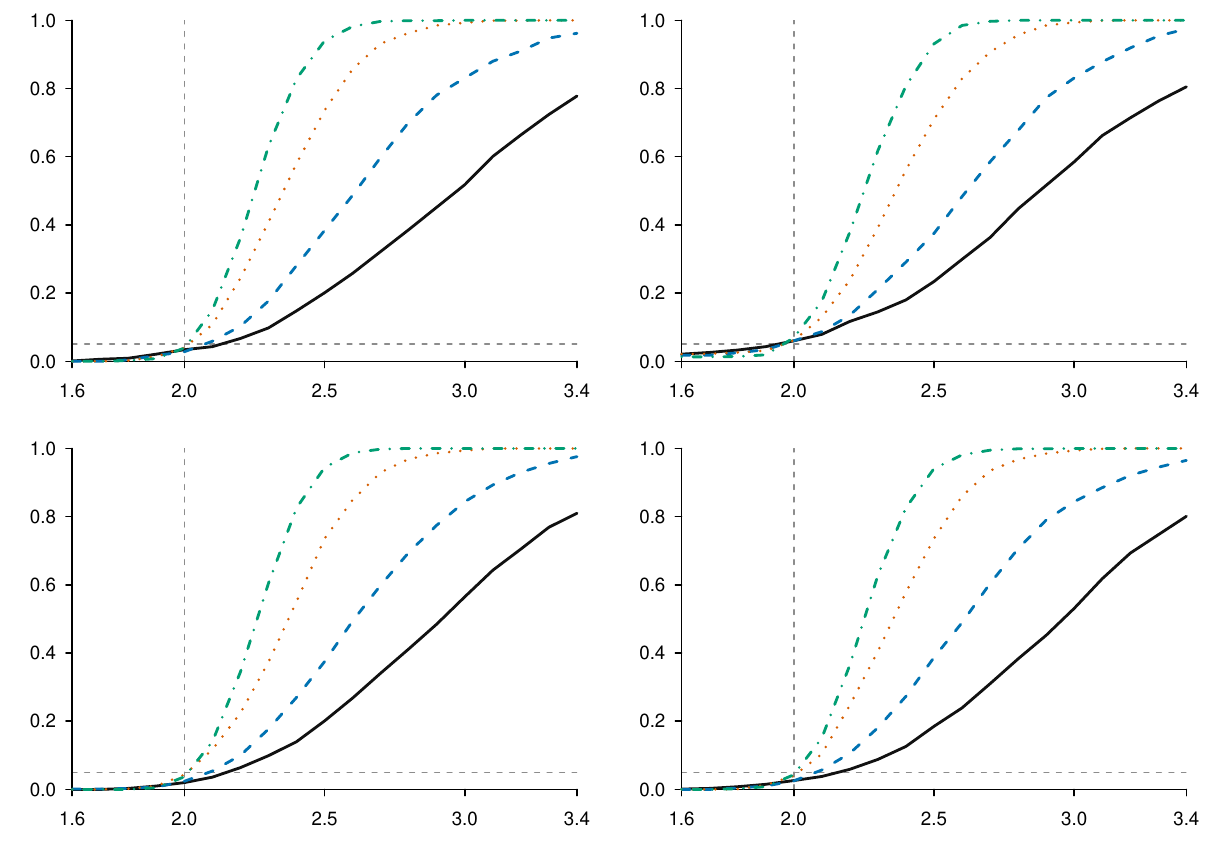}
\caption{Empirical rejection frequencies at nominal level $5\%$.
The panels show MI and MII in the top row and MIII and MIV in
the bottom row. The horizontal axis gives $b$ and the vertical
axis gives the rejection frequency. Black solid, blue dashed,
orange dotted and green dot-dashed curves correspond to
$n=100,200,500,1000$, respectively, with $p=n/2$.
The gray dashed lines mark $b=\Delta=2$ and the nominal
rejection probability $0.05$.}
\label{fig:simulation_power}
\end{figure}

\newpage
\section{Proofs}

\subsection{Proof of Theorem \ref{thm_main_test_sph}}

\begin{proof}[Proof of Theorem \ref{thm_main_test_sph}]
We focus only on the case of $c<1$; the case $c \ge 1$ is similar but simpler.\medskip

\noindent Since $p/n\to c<1$ and $\theta<1-c$, Algorithm
\ref{alg:sphericity} uses both extreme eigenvalue estimators
for all sufficiently large $n$. By Theorem \ref{thm_rmt},
\begin{align} \label{eq:joint_conv}
    \sqrt n
    \begin{pmatrix}
        \hat\alpha_{1,n}^c-\alpha_{\max}\\
        \alpha_{\min}-\hat\alpha_{p,n}^c
    \end{pmatrix}
    \xrightarrow{d}
    (Z_1,Z_2)^\top
    \sim\mathcal N_2(\mathbf0,\mathbf J\mathbf C\mathbf J),
    \qquad \mathbf J=\operatorname{diag}(1,-1).
\end{align}
Let $I_n$ denote the index set selected by the algorithm.
The population eigenvalues are fixed by
Assumption \ref{ass4_spikes}. If
$\alpha_{\max}-1>1-\alpha_{\min}$, consistency of the
estimators and $\log(n)/\sqrt n\to0$ imply
$\mathbb P(I_n=\{1\})\to1$. If
$\alpha_{\max}-1<1-\alpha_{\min}$, the same argument gives
$\mathbb P(I_n=\{2\})\to1$. If the two deviations coincide,
the joint convergence in \eqref{eq:joint_conv} gives
\[
    \sqrt n(\hat\alpha_{1,n}^c+\hat\alpha_{p,n}^c-2)
    =O_{\mathbb P}(1),
\]
and hence
\[
    \mathbb P\bigl(
        |\hat\alpha_{1,n}^c+\hat\alpha_{p,n}^c-2|
        \le\log(n)/\sqrt n
    \bigr)\to1.
\]
Thus $\mathbb P(I_n=I^\star)\to1$ in all cases.
The same comparison and the joint convergence above imply
\begin{align}
\label{eq:main_test_centered_limit}
    \tilde T_n-\sqrt n\{\mathcal S(\bfSigma)-\Delta\}
    \xrightarrow{d}\max_{i\in I^\star}Z_i.
\end{align}
Indeed, when the population deviations differ, the larger
determines the empirical maximum with probability tending
to one; when they coincide, the result follows by the
continuous mapping theorem.\\
Now, let $F$ denote the distribution function of
$\max_{i\in I^\star}Z_i$. Positive definiteness of $\mathbf C$
implies that $(Z_i)_{i\in I^\star}$ has an everywhere positive
Gaussian density. Consequently, $F$ is continuous and strictly
increasing, and its $(1-\alpha)$-quantile $q$ is finite and
satisfies $F(q)=1-\alpha$.
Convergence of the simulated critical
value follows by standard arguments (omitted here) and it means formally that
\[
    q_{1-\alpha}^{B_n}\xrightarrow{\mathbb P}q.
\]
If now $\mathcal S(\bfSigma)=\Delta$ (boundary of the hypothesis), then
\eqref{eq:main_test_centered_limit}, convergence of the
critical value and continuity of $F$ yield
\[
    \mathbb P\bigl(\tilde T_n>q_{1-\alpha}^{B_n}\bigr)
    \longrightarrow
    \mathbb P\bigl(\max_{i\in I^\star}Z_i>q\bigr)
    =\alpha.
\]
If $\mathcal S(\bfSigma)<\Delta$, respectively
$\mathcal S(\bfSigma)>\Delta$,
\eqref{eq:main_test_centered_limit} implies
$\tilde T_n\xrightarrow{\mathbb P}-\infty$, respectively
$\tilde T_n\xrightarrow{\mathbb P}+\infty$.
Since the critical value converges to the finite number $q$,
the rejection probabilities converge to zero and one,
respectively.
\end{proof}

\subsection{Proof of Theorem \ref{thm_rmt}} \label{sec_proof_thm_rmt}

In Theorem~\ref{thm_rmt}, quantities $\hat\alpha_{i,n}^c$ defined in \eqref{eq_def_hat_alpha1} and \eqref{eq_def_hat_alphap} correspond to estimation of eigenvalues of $\bfD^{-1/2}\bfSigma\bfD^{-1/2}$ via eigenvalues of $\hat\bfD^{-1/2}\hat\bfSigma\hat\bfD^{-1/2}$. However, it is easier to conduct the theoretical study using the eigenvalues of $\bfD^{-1/2}\hat\bfSigma\bfD^{-1/2}$. We denote the resulting estimators by
\begin{align*}
   \hat\alpha_{i,n} = -\lb-\frac{1-c_n}{\lambda_i(\bfD^{-1/2}\hat\bfSigma\bfD^{-1/2})}+\frac{1}{n}\sum_{j\ne i}\frac{1}{\lambda_j(\bfD^{-1/2}\hat\bfSigma\bfD^{-1/2})-\lambda_i(\bfD^{-1/2}\hat\bfSigma\bfD^{-1/2})}\rb^{-1},
\end{align*}
where $c_n=p/n$. The following lemma ensures validity of this procedure, and its proof is deferred to the end of this section.
\begin{lemma}\label{lem:alpha_hat-alpha_hat_c}
Under Assumptions \ref{ass1_dim}--\ref{ass5_groups}, for $i=1$ and, if $c<1$, also for $i=p$,
    \begin{align*}
        |\hat{\alpha}_{i,n}-\hat{\alpha}_{i,n}^c|=o_\PR(n^{-1/2}).
    \end{align*}
\end{lemma}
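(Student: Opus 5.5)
The plan is to compare the two matrices $\bfA:=\bfD^{-1/2}\hat\bfSigma_n\bfD^{-1/2}$ and $\hat\bfSigma_n^c=\widehat\bfD^{-1/2}\hat\bfSigma_n\widehat\bfD^{-1/2}$. The difference $\widehat\bfD^{-1/2}\bfD^{1/2}=\operatorname{diag}\bigl((\sigma_1/\hat\sigma_1)\bfI_{\ell_1},\ldots,(\sigma_L/\hat\sigma_L)\bfI_{\ell_L}\bigr)=:\bfI_p+\bfB$ is blockwise scalar. So we may write $\hat\bfSigma_n^c=(\bfI_p+\bfB)\bfA(\bfI_p+\bfB)$, and everything reduces to controlling $\|\bfB\|=\max_i|\sigma_i/\hat\sigma_i-1|$.

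\textbf{Step 1: the group variances.} The first step is to show $\hat\sigma_i^2-\sigma_i^2=O_\PR(n^{-1/2}p^{-1/2})+O_\PR(n^{-1})$, hence $\|\bfB\|=O_\PR(n^{-1})$, using $p\asymp n$ and \ref{ass5_groups}. To see this, write $\hat\sigma_i^2=\ell_i^{-1}\tr(\bfP_i\hat\bfSigma_n\bfP_i)$ with $\bfP_i$ the coordinate projection. The centering term $\bar\bfy$ contributes $O_\PR(1/n)$ in expectation. The main term is the quadratic form $(n\ell_i)^{-1}\sum_j \mathbf q_j^\top\bfGamma\bfP_i\bfGamma\mathbf q_j$, a sum of $n$ i.i.d. quadratic forms. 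The standard variance bound for quadratic forms under finite fourth moments gives variance at most $C\tr(\bfGamma\bfP_i\bfGamma)^2/(n\ell_i^2)\le C\|\bfSigma\|^2/(n\ell_i)$, where $\|\bfSigma\|$ is bounded because the $\alpha_j$ are bounded and the $\sigma_i^2$ are fixed. Thus $\|\bfB\|=O_\PR(n^{-1})$, and in particular $o_\PR(n^{-1/2})$ with room to spare.

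\textbf{Step 2: eigenvalue perturbation.} Since $\|\bfA\|=O_\PR(1)$ by the appendix bound on sample covariance norms, we get $\|\hat\bfSigma_n^c-\bfA\|\le(2\|\bfB\|+\|\bfB\|^2)\|\bfA\|=O_\PR(n^{-1})$. A sharper route is to use Ostrowski's theorem: $\hat\bfSigma_n^c$ is congruent to $\bfA$, so $\lambda_j(\hat\bfSigma_n^c)=\theta_j\lambda_j(\bfA)$ with $\theta_j\in[\min_i\sigma_i^2/\hat\sigma_i^2,\max_i\sigma_i^2/\hat\sigma_i^2]$. This gives the relative bound $|\lambda_j(\hat\bfSigma_n^c)-\lambda_j(\bfA)|\le\epsilon_n\lambda_j(\bfA)$ uniformly in $j$, with $\epsilon_n=O_\PR(n^{-1})$. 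The relative form is exactly what is needed near zero when $c<1$.

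\textbf{Step 3: stability of the correction formula.} Write $\hat\alpha=-1/g(\lambda_i,\{\lambda_j\})$ with $g=-(1-c_n)/\lambda_i+n^{-1}\sum_{j\ne i}(\lambda_j-\lambda_i)^{-1}$; the paper's $\hat\alpha^c$ uses $p/n=c_n$, so the same constant appears in both. The required facts come from Assumption \ref{ass4_spikes} and the spiked-model results underlying Theorem \ref{thm_rmt}:
\begin{enumerate}[label=(\roman*)]
\item for $i=1$ (resp. $i=p$, $c<1$), $\lambda_i(\bfA)$ converges to the outlier location $\phi(\alpha_{\max})$ (resp. $\phi(\alpha_{\min})$);
\item this location is separated by a gap $\delta>0$ from all other sample eigenvalues with probability tending to one, because the spike lies outside the support and is supercritical;
\item $\lambda_p(\bfA)$ is bounded away from $0$ for $c<1$, which holds since $\phi(\alpha_{\min})>0$.
\end{enumerate}
On this event, each summand satisfies $|(\lambda_j'-\lambda_i')^{-1}-(\lambda_j-\lambda_i)^{-1}|\le C\delta^{-2}(|\lambda_j'-\lambda_j|+|\lambda_i'-\lambda_i|)$. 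Summing $p$ terms against the factor $1/n$ gives $O_\PR(\epsilon_n)$, and the first term of $g$ changes by $O_\PR(\epsilon_n)$ as well. Finally, $g$ evaluated at $\bfA$ converges to $-1/\alpha_{\max}\neq0$ (resp. $-1/\alpha_{\min}$), by the identity \eqref{eq_alpha_formula} and consistency of $\hat\alpha_{i,n}$. Hence $x\mapsto-1/x$ is Lipschitz near the limit, and $|\hat\alpha_{i,n}-\hat\alpha^c_{i,n}|=O_\PR(n^{-1})=o_\PR(n^{-1/2})$.

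The main obstacle is Step 3(ii): proving that the outlier remains separated from the remaining sample eigenvalues uniformly with probability tending to one, and that $g$ does not approach zero. I would import both facts from the exact-separation and outlier-localization results for generalized spiked models, such as \cite{zhang:zheng:pan:zhong:2022}, which are already used in proving Theorem \ref{thm_rmt}, rather than reprove them.
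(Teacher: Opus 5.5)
Your proposal is correct and follows essentially the same route as the paper: an $O_\PR(n^{-1})$ bound on the group-variance errors via the quadratic-form variance formula, then Weyl's inequality to compare all eigenvalues of $\widehat{\bfD}^{-1/2}\widehat{\bfSigma}_n\widehat{\bfD}^{-1/2}$ and $\bfD^{-1/2}\widehat{\bfSigma}_n\bfD^{-1/2}$ at order $O_\PR(n^{-1})$, then Lipschitz stability of the correction formula using separation of the target eigenvalue and boundedness of the estimates (the Ostrowski refinement is not needed, since the only denominators are the differences $\lambda_j-\lambda_i$ and the target $\lambda_i$ itself). One small fix: the $\sigma_i^2$ are not assumed fixed, so you should carry out Step 1 as the paper does, in the normalized coordinates $\bfz_j=\bfD^{-1/2}(\bfy_j-\mu)$; there the relevant quadratic-form matrix has trace $\ell_i$ and norm at most $\alpha_{\max}$, which gives the relative bound on $\hat\sigma_i^2/\sigma_i^2-1$ directly.
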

In what follows we denote by $\bfLambda$ the diagonal matrix of eigenvalues of  $\bfD^{-1/2}\bfSigma\bfD^{-1/2}$, i.e. $\bfLambda=\operatorname{diag}(\alpha_1,\ldots,\alpha_p)$. Then the singular value decomposition of  matrix $\bfD^{-1/2}\bfGamma$ has a form
  \begin{align*}
    \bfD^{-1/2}\bfGamma=\bfV\bfLambda^{1/2}\bfU^\intercal.
  \end{align*}
  
Furthermore, we define $\mathbf{1}_n$ as the $n$ dimensional vector of  ones. 
By letting $\boldsymbol{\Phi}=\mathbf{I}_n-\frac{1}{n}\mathbf{1}_n\mathbf{1}_n'$, the corrected sample covariance matrix $\bfD^{-1/2}\hat\bfSigma_n\bfD^{-1/2}$ can be written as 
\begin{align}
   \bfD^{-1/2}\hat\bfSigma_n\bfD^{-1/2} =  \frac{1}{n} \bfD^{-1/2}\bfGamma \bfQ \boldsymbol{\Phi} \bfQ^\top \bfGamma^\top\bfD^{-1/2}. 
\end{align}
with eigenvalues $\hat{\lambda}_{n,1}\ge\ldots\ge\hat{\lambda}_{n,p}$. By polar decomposition, these are also the eigenvalues of $n^{-1}\bfB_n^{1/2}\bfQ\boldsymbol\Phi\bfQ^\top\bfB_n^{1/2}$, where $\bfB_n=\bfGamma^\top\bfD^{-1}\bfGamma=\bfU\bfLambda\bfU^\top$. Thus the population eigenvectors in Appendix \ref{sec:app_rmt} are the columns of $\bfU$.
Moreover, for any non-negative definite matrix $\mathbf{A} \in \R^{m\times m}$ with eigenvalues $\lambda_1(\bfA), \ldots, \lambda_m (\bfA)$, we denote its empirical spectral measure by 
\begin{align}
    F^{\bfA} = \frac{1}{m} \sum_{i=1}^m \delta_{\lambda_i(\bfA)},
\end{align}
where $\delta_{\cdot}$ denotes the Dirac measure. 

\begin{proof}[Proof of Theorem \ref{thm_rmt}]
We focus on part \ref{thm_part_a}, since part \ref{thm_part_b} follows the same proof, but with case $k=p$ removed. We begin by showing a similar CLT result for the related expressions $\underline{s}_{n,k}+1/\alpha_k$, where we denote $\underline{s}_{n,k}=-1/\hat\alpha_{k,n}$ for $k \in \{1,p\}$. Then, with Lemma~\ref{lem:alpha_hat-alpha_hat_c}, we immediately recover Theorem~\ref{thm_rmt}. This proof follows the general strategy of \cite{bai:ding:2012}, but uses the general-factor eigenvalue CLT of \cite{zhang:zheng:pan:zhong:2022} in place of the independent spike and bulk blocks assumed in that work.

To deal with $\underline{s}_{n,k}+1/\alpha_{k}$, we define
$\boldsymbol{\Lambda}_{-k}$ as the $(p-1)\times(p-1)$ matrix obtained from $\bfLambda$ by removing the $k$th row and column. In simpler terms, for $k=1$ we remove the first eigenvalue, and for $k=p$ we remove the last one. Similarly, the $p\times (p-1)$ matrix $\bfU_{-k}$ is obtained from $\bfU$ by removing the $k$th column (eigenvector). 
    Let us denote the $p-1$  eigenvalues of $\bfS_{-k}:=n^{-1}\bfLambda_{-k}^{1/2}\bfU_{-k}^\intercal\bfQ\boldsymbol{\Phi}\bfQ^\intercal\bfU_{-k}\bfLambda_{-k}^{1/2}$ by $\hat{\mu}_{n,1}^{(-k)}\ge\hat\mu_{n,2}^{(-k)}\ge\ldots\ge\hat\mu_{n,p-1}^{(-k)}$. 
    First we notice that 
    \begin{align*}
        &\bfS_{-1}=\frac1n
        \begin{pmatrix}
            0&\bfI_{p-1}
        \end{pmatrix}\begin{pmatrix}
            \alpha_1^{1/2}&0\\
            0&\bfLambda_{-1}^{1/2}
        \end{pmatrix}\bfU^\intercal \bfQ\boldsymbol{\Phi}\bfQ^\intercal \bfU
       \begin{pmatrix}
            \alpha_1^{1/2}&0\\
            0&\bfLambda_{-1}^{1/2}
        \end{pmatrix}
        \begin{pmatrix}
            0
            \\
            \bfI_{p-1}
        \end{pmatrix},\\
        &\bfS_{-p}=\frac1n \begin{pmatrix}
            \bfI_{p-1}&0
        \end{pmatrix}\begin{pmatrix}
            \bfLambda_{-p}^{1/2}&0\\
            0&\alpha_p^{1/2}
        \end{pmatrix}\bfU^\intercal \bfQ\boldsymbol{\Phi}\bfQ^\intercal \bfU
       \begin{pmatrix}
            \bfLambda_{-p}^{1/2}&0\\
            0&\alpha_p^{1/2}
        \end{pmatrix}
        \begin{pmatrix}
            \bfI_{p-1}
            \\
            0
        \end{pmatrix}.
    \end{align*}
    The matrices on the r.h.s. of the previous display surrounded by $(0\;\bfI_{p-1})$ and $(\bfI_{p-1}\;0)$, that is, the matrices
    \begingroup\setlength{\arraycolsep}{3pt}
    \begin{align*}
        \frac1n\begin{pmatrix}
            \alpha_1^{1/2}&0\\
            0&\bfLambda_{-1}^{1/2}
        \end{pmatrix}\bfU^\intercal \bfQ\boldsymbol{\Phi}\bfQ^\intercal \bfU
       \begin{pmatrix}
            \alpha_1^{1/2}&0\\
            0&\bfLambda_{-1}^{1/2}
        \end{pmatrix}
        \ \textnormal{and}\  
        \frac1n\begin{pmatrix}
            \bfLambda_{-p}^{1/2}&0\\
            0&\alpha_p^{1/2}
        \end{pmatrix}\bfU^\intercal \bfQ\boldsymbol{\Phi}\bfQ^\intercal \bfU
       \begin{pmatrix}
            \bfLambda_{-p}^{1/2}&0\\
            0&\alpha_p^{1/2}
        \end{pmatrix}
    \end{align*}
    \endgroup
    have the same eigenvalues as $\bfD^{-1/2}\bfSigmahat_n\bfD^{-1/2}$. Then, the interlacing theorem for eigenvalues gives us almost surely
    \begin{align}\label{eq:interlac_eigenval}
        \hat{\lambda}_{n,i}\ge\hat{\mu}^{(-k)}_{n,i}\ge\hat{\lambda}_{n,1+i}, \quad 1 \leq i \leq p-1.
    \end{align}
   
   To decompose $\underline{s}_{n,k}+1/\alpha_k$ into simpler terms, we introduce two intermediate expressions 
    \begin{align}
            \underline{\hat{s}}_{n,k}&=-\frac{1-c_n}{\hat{\lambda}_{n,k}}+\frac{1}{n}\sum_{j=1}^{p-1}\frac{1}{\hat{\mu}^{(-k)}_{n,j}-\hat{\lambda}_{n,k}}, \\
             \underline{\bar{s}}_{n,k}&=-\frac{1-c_n}{\lambda_{n,k}}+\frac{1}{n}\sum_{j=1}^{p-1}\frac{1}{\hat{\mu}^{(-k)}_{n,j}-\lambda_{n,k}},
    \end{align}
    where $\lambda_{n,k}$ is an approximation of $\hat{\lambda}_{n,k}$ of the form
    \begin{align}\label{eq:lambda_n_k}
        \lambda_{n,k}=\phi_{n,k}(\alpha_k):=\alpha_k+c_n\int\frac{\alpha_kt}{\alpha_k-t}dH_{n,k}^{(0)}(t)=\alpha_k+\frac1n\sum_{j\ne k}\frac{\alpha_k\alpha_j}{\alpha_k-\alpha_j}.
    \end{align}
    Here $H_{n,k}^{(0)}:=p^{-1}(\delta_0+\sum_{j\ne k}\delta_{\alpha_j})$ replaces the target spike by zero. The function $\phi_{n,k}$ is obtained from $\phi$ by replacing $(c,H)$ with $(c_n,H_{n,k}^{(0)})$. This removes the pole from the finite-sample centering; no rate of convergence of $H_n$ to $H$ is required.
    Then, we decompose
    \begin{align}
        \underline{s}_{n,k}+1/\alpha_k=(\underline{s}_{n,k}-\underline{\hat{s}}_{n,k})+(\underline{\hat{s}}_{n,k}-\underline{\bar{s}}_{n,k})+(\underline{\bar{s}}_{n,k}+1/\alpha_k):=I_1^{(k)}+I_2^{(k)}+I_3^{(k)},
    \end{align}
    where $I_1^{(k)}, I_2^{(k)}, I_3^{(k)}$ are defined in the obvious way. In the following, we will show that $I_1^{(k)}, I_3^{(k)} = O_{\PR}(1/n)$, while $\sqrt{n} I_2^{(k)}$ satisfies a CLT.

   Recall the definitions $\underline{s}_{n,k}=-1/\hat{\alpha}_{k,n}$, and  
    \begin{align*}
        \hat{\alpha}_{k,n}=-\lb-\frac{1-c_n}{\hat{\lambda}_{n,k}}+\frac{1}{n}\sum_{i\neq k}\frac{1}{\hat{\lambda}_{n,i}-\hat{\lambda}_{n,k}}\rb^{-1}.
    \end{align*}
This gives for the first term
\begin{align}
    &I_1^{(1)}=
        \frac{1}{n}\sum_{j=1}^{p-1}\frac{\hat{\mu}^{(-1)}_{n,j}-\hat{\lambda}_{n,j+1}}{(\hat{\lambda}_{n,j+1}-\hat{\lambda}_{n,1})(\hat{\mu}^{(-1)}_{n,j}-\hat{\lambda}_{n,1})},\\
      & I_1^{(p)}= \frac{1}{n}\sum_{j=1}^{p-1}\frac{\hat{\mu}^{(-p)}_{n,j}-\hat{\lambda}_{n,j}}{(\hat{\lambda}_{n,j}-\hat{\lambda}_{n,p})(\hat{\mu}^{(-p)}_{n,j}-\hat{\lambda}_{n,p})}.
\end{align}
Due to Assumption~\ref{ass4_spikes} and the exact-separation argument underlying Proposition~\ref{thm:sample_outlier_convergence}, the target eigenvalues are uniformly separated from the remaining sample eigenvalues and from the spectrum of $\bfS_{-k}$, with probability tending to one. Indeed, $\phi_{n,k}'$ is bounded away from zero on a fixed neighborhood of $\alpha_k$, whose image is a spectral gap for the model with that spike removed. All eigenvalues are bounded in probability. Using the signs in \eqref{eq:interlac_eigenval}, we obtain
\begin{align}
    &I_1^{(1)}=
        \frac{\mathcal{O}_{\PR}(1)}{n}\lb\sum_{j=1}^{p-1}\hat{\mu}^{(-1)}_{n,j}-\sum_{j=2}^p\hat{\lambda}_{n,j}\rb,\\
       -&I_1^{(p)}= \frac{\mathcal{O}_{\PR}(1)}{n}\lb\sum_{j=1}^{p-1}\hat{\lambda}_{n,j}-\sum_{j=1}^{p-1}\hat{\mu}^{(-p)}_{n,j}\rb.
\end{align}
For the first expression we use l.h.s. of \eqref{eq:interlac_eigenval} and write
\begin{align*}
   0\leq  I_1^{(1)}\leq \frac{\mathcal{O}_{\PR}(1)}{n}\lb\hat{\lambda}_{n,1}-\hat{\lambda}_{n,p}\rb=\mathcal{O}_{\PR}\lb\frac{1}{n}\rb,
\end{align*}
while for the second one, we use  the r.h.s of \eqref{eq:interlac_eigenval} to obtain the same expression as above:
\begin{align*}
   0\leq  -I_1^{(p)}\leq  \frac{\mathcal{O}_{\PR}(1)}{n}\lb\hat{\lambda}_{n,1}-\hat{\lambda}_{n,p}\rb=\mathcal{O}_{\PR}\lb\frac{1}{n}\rb.
\end{align*}

To deal with the third term $I_3^{(k)}$, we first define $\underline{s}_{n,k}^{(0)}(z)$ as the companion Stieltjes transform determined by
\begin{align}
    z=-\frac{1}{\underline{s}_{n,k}^{(0)}(z)}+c_n\int\frac{t}{1+t\underline{s}_{n,k}^{(0)}(z)}dH_{n,k}^{(0)}(t).
\end{align}
Then, plugging $z=\lambda_{n,k}$ and comparing with \eqref{eq:lambda_n_k}, we get, for all sufficiently large $n$,
\begin{align} \label{eq_alpha_formula}
    -\frac{1}{\alpha_k}=\underline{s}_{n,k}^{(0)}(\lambda_{n,k}).
\end{align}
The branch is fixed by $\phi_{n,k}'(\alpha_k)>0$. According to Proposition~\ref{th:steltjes}, we can write
\begin{align}
    \underline{s}_{n,k}^{(0)}(z)=-\frac{1-c_n}{z}+c_n\int\frac{1}{x-z}dF^{c_n,H_{n,k}^{(0)}}(x).
\end{align}
Let $\bfS_{-k}^{(0)}=\operatorname{diag}(0,\bfS_{-k})$, so that
$F^{\bfS_{-k}^{(0)}}=p^{-1}\delta_0+(p-1)p^{-1}F^{\bfS_{-k}}$.
These preparations show that
\begin{align*}
    I_3^{(k)}
    &=-\frac{1-c_n}{\lambda_{n,k}}+\frac1n\sum_{j=1}^{p-1}\frac{1}{\hat\mu_{n,j}^{(-k)}-\lambda_{n,k}}-\underline{s}_{n,k}^{(0)}(\lambda_{n,k})\\
    &=\frac{1}{n\lambda_{n,k}}+c_n\int\frac{d(F^{\bfS_{-k}^{(0)}}-F^{c_n,H_{n,k}^{(0)}})(x)}{x-\lambda_{n,k}}
    =\mathcal O_\PR(n^{-1}).
\end{align*}
For the last equality, the tightness and bias bounds in Theorems 2 and 3 of \cite{najim:yao:2016:proof} apply to a smooth extension of $x\mapsto(x-\lambda_{n,k})^{-1}$ off the fixed spectral gap described above. These extensions have uniformly bounded derivatives. By polar decomposition, the same nonzero eigenvalues arise from a symmetric-factor sample covariance matrix with population spectral distribution $H_{n,k}^{(0)}$. Empirical centering changes its resolvent trace by $\mathcal O_\PR(1)$ on this gap, by rank-one interlacing. This concludes the proof of $I_3^{(k)}=\mathcal O_\PR(n^{-1})$.

Passing to the remaining term $I_2^{(k)}$, we write
\begin{align}
    I_2^{(k)}
    &=\hat{\underline{s}}_{n,k}-\bar{\underline{s}}_{n,k}
    =\frac{(1-c_n)(\hat{\lambda}_{n,k}-\lambda_{n,k})}{\hat{\lambda}_{n,k}\lambda_{n,k}}+\frac{1}{n}\sum_{j=1}^{p-1}\frac{\hat{\lambda}_{n,k}-\lambda_{n,k}}{(\hat{\mu}^{(-k)}_{n,j}-\lambda_{n,k})(\hat{\mu}^{(-k)}_{n,j}-\hat{\lambda}_{n,k})}\\
    &=(\hat{\lambda}_{n,k}-\lambda_{n,k})\lb\frac{(1-c_n)}{\hat{\lambda}_{n,k}\lambda_{n,k}}+\lb c_n-\frac{1}{n}\rb\int\frac{dF^{\bfS_{-k}}(x)}{(x-\lambda_{n,k})(x-\hat{\lambda}_{n,k})}\rb .\label{e1}
\end{align}
The second factor in \eqref{e1} satisfies
\begin{align}
& \frac{(1-c_n)}{\hat{\lambda}_{n,k}\lambda_{n,k}}+\lb c_n-\frac{1}{n}\rb\int\frac{dF^{\bfS_{-k}}(x)}{(x-\lambda_{n,k})(x-\hat{\lambda}_{n,k})} \\
& = (1-c)/\phi^2(\alpha_k)+c\int\frac{dF^{c,H}(x)}{(x-\phi(\alpha_k))^2} + o_{\PR}(1) := \tau_k+ o_{\PR}(1). \label{eq:def_tau}
\end{align} 
For the first factor in \eqref{e1}, we use Theorem 2.2 and Corollary 2.4 of \cite{zhang:zheng:pan:zhong:2022}, with the lower-outlier extension explained after Proposition~\ref{th:clt_eigen}. The corollary covers empirical centering. This gives 
\begin{align}
      n^{1/2} \lb \hat{\lambda}_{n,k}-\lambda_{n,k} \rb_{k\in\{1,p\} }  \cond \mathcal{N}\lb 0, \lb \Omega_{ij}\rb_{1 \leq i,j \leq 2} \rb , \quad n\to\infty, \label{eq_clt}
\end{align}
    where, using the notation $\gamma_4 = \E [q_{11}^4],$
    \begin{align}\label{eq:def_Omega}
        &\Omega_{11}=\alpha_{\max}^2(\phi^\prime(\alpha_{\max}))^2\lim\left[(\gamma_4-3)\sum_{i=1}^p U_{i1}^4\right]+2\alpha_{\max}^2\phi^\prime(\alpha_{\max}),\\
        &\Omega_{22}=\alpha_{\min}^2(\phi^\prime(\alpha_{\min}))^2\lim\left[(\gamma_4-3)\sum_{i=1}^p U_{ip}^4\right]+2\alpha_{\min}^2\phi^\prime(\alpha_{\min}),\\
        &\Omega_{12}=\Omega_{21}=\alpha_{\max}\alpha_{\min}\phi^\prime(\alpha_{\max})\phi^\prime(\alpha_{\min})\lim\left[(\gamma_4-3)\sum_{i=1}^p U_{i1}^2 U_{ip}^2\right].
    \end{align}

In summary, we have shown that 
\begin{align}
\sqrt{n} \lb \underline{s}_{n,k}+1/\alpha_k \rb_{k\in\{1,p\}} = \sqrt{n}(I_2^{(k)})_{k\in\{1,p\}} + \op  .
    \end{align}
    By combining this result with \eqref{e1}, \eqref{eq:def_tau} and \eqref{eq_clt}, we obtain 
    \begin{align*}
        \sqrt{n} \lb \underline{s}_{n,k}+1/\alpha_k \rb_{k\in\{1,p\}} \cond 
 \mathcal{N}_2\lb0,
 \begin{pmatrix}
            \tau_{\max}&0\\
            0&\tau_{\min}
        \end{pmatrix}\boldsymbol{\Omega}\begin{pmatrix}
            \tau_{\max}&0\\
            0&\tau_{\min}
        \end{pmatrix}  \rb, \quad n\to\infty.
    \end{align*}
Then, an application of the multivariate delta method shows
\begin{align*}
 &\sqrt{n} \lb -1/\underline{s}_{n,k}-\alpha_k \rb_{k\in\{1,p\}} \\
 &\quad\cond 
    \mathcal{N}\lb0,
    \left[ \left\{ 
    \begin{pmatrix}
            \alpha_{\max}^2\tau_{\max}&0\\
            0&\alpha_{\min}^2\tau_{\min}
        \end{pmatrix}\boldsymbol{\Omega}\begin{pmatrix}
            \alpha_{\max}^2\tau_{\max}&0\\
            0&\alpha_{\min}^2\tau_{\min}
        \end{pmatrix}
        \right\}_{i,j} \right]_{1 \leq i,j \leq 2} 
        \rb,
\end{align*}
which concludes the proof. 

\end{proof}

We conclude this section by giving the proof of Lemma \ref{lem:alpha_hat-alpha_hat_c}.

\begin{proof}[Proof of Lemma \ref{lem:alpha_hat-alpha_hat_c}]
For convenience we denote the eigenvalues of $\bfD^{-1/2}\hat\bfSigma\bfD^{-1/2}$ by $\hat\lambda_{n,1}\ge\ldots\ge\hat\lambda_{n,p}$, and eigenvalues of $\hat\bfD^{-1/2}\hat\bfSigma\hat\bfD^{-1/2}$ by $\hat\lambda^c_{n,1}\ge\ldots\ge\hat\lambda^c_{n,p}$.
We start by bounding the relative error of each empirical group variance. Write $\bfz_j=\bfD^{-1/2}(\bfy_j-\mu)=\bfA\bfq_j$, where $\bfA=\bfD^{-1/2}\bfGamma$, and let $\bfP_i$ be the $\ell_i\times p$ coordinate selection matrix for group $i$. Then
\begin{align}
    \frac{\hat\sigma_i^2}{\sigma_i^2}-1
    =\frac{1}{n\ell_i}\sum_{j=1}^n\bigl(\|\bfP_i\bfz_j\|^2-\ell_i\bigr)
      -\frac{\|\bfP_i\bar{\bfz}\|^2}{\ell_i}.
\end{align}
The matrix $\bfA^\intercal\bfP_i^\intercal\bfP_i\bfA$ has trace $\ell_i$, bounded norm and hence squared Frobenius norm $\mathcal O(\ell_i)$. The quadratic-form variance formula gives $\Var(\|\bfP_i\bfz_j\|^2)=\mathcal O(\ell_i)$, while $\E\|\bfP_i\bar{\bfz}\|^2=\ell_i/n$. Since $\ell_i\asymp n$ and $L$ is fixed, it follows that
\begin{align}
    \max_{1\le i\le L}\left|\frac{\hat\sigma_i^2}{\sigma_i^2}-1\right|=\mathcal O_\PR(n^{-1}),
    \qquad
    \|\hat\bfD^{-1/2}\bfD^{1/2}-\bfI_p\|=\mathcal O_\PR(n^{-1}).
\end{align}
In particular, the empirical group variances are positive with probability tending to one. Also, $\|\bfD^{-1/2}\hat\bfSigma\bfD^{-1/2}\|=\mathcal O_\PR(1)$, since $\|\bfA\|^2=\alpha_{\max}$ and $\|\bfQ\|=\mathcal O_\PR(\sqrt n)$. Multiplication on both sides by $\hat\bfD^{-1/2}\bfD^{1/2}=\bfI_p+\mathcal O_\PR(n^{-1})$ and Weyl's inequality now yield
\begin{align}
    \max_{1\le j\le p}|\hat\lambda_{n,j}-\hat\lambda^c_{n,j}|
    \le\|\bfD^{-1/2}\hat\bfSigma\bfD^{-1/2}-\hat\bfD^{-1/2}\hat\bfSigma\hat\bfD^{-1/2}\|
    =\mathcal O_\PR(n^{-1}).
\end{align}
Thus the target eigenvalues have the same localization properties under both normalizations. In particular, for $i=1$ and, if $c<1$, also for $i=p$, they are bounded away from zero and uniformly separated from the other eigenvalues, with probability tending to one. Their companion-transform estimates converge to $-1/\alpha_i$, so $\hat\alpha_{i,n}$ and $\hat\alpha^c_{i,n}$ are bounded in probability.

Now we can return to bounding $|\hat\alpha_{i,n}-\hat\alpha_{i,n}^c|$. After some algebra, using the definitions and the triangle inequality, we get
\begin{align}
    |\hat\alpha_{i,n}-\hat\alpha_{i,n}^c|
    &\le\mathcal O_\PR(1)\left(|\hat\lambda_{n,i}-\hat\lambda^c_{n,i}|+\frac1n\sum_{j\ne i}|\hat\lambda_{n,j}-\hat\lambda^c_{n,j}|\right)\\
    &=\mathcal O_\PR(n^{-1})=o_\PR(n^{-1/2}).
\end{align}
\end{proof}

\section*{Funding}
Nina Dörnemann was partially supported by AUFF grant numbers 47221 and 47388. Tim Kutta was partially supported by AUFF grant number 47222. Daria Tieplova was supported by both AUFF grants numbers 47388 and 47222.

\newpage 

\section*{Appendix: Technical Details}
\appendix
\section{Details on the covariance matrix \texorpdfstring{$\mathbf{C}$}{C} in Theorem \ref{thm_rmt} and its estimator \texorpdfstring{$\hat{\mathbf{C}}_n$}{C-hat}} \label{sec_C}
Here we gather all the elements from the proof of Theorem \ref{thm_rmt} used to determine matrix $\bfC$. We display both coordinates for $c<1$; for $c\ge1$, only the terms corresponding to $\alpha_{\max}$ are used. For $k=\min,\max$ we have
\begin{align}
   \tau_k= \frac{1-c}{\phi^2(\alpha_k)}+c\int\frac{dF^{c,H}(x)}{(x-\phi(\alpha_k))^2} 
\end{align}
and 
\begin{align}
            &\Omega_{11}=\alpha_{\max}^2(\phi^\prime(\alpha_{\max}))^2\lim\left[(\gamma_4-3)\sum_{i=1}^p U_{i1}^4\right]+2\alpha_{\max}^2\phi^\prime(\alpha_{\max}),\\
        &\Omega_{22}=\alpha_{\min}^2(\phi^\prime(\alpha_{\min}))^2\lim\left[(\gamma_4-3)\sum_{i=1}^p U_{ip}^4\right]+2\alpha_{\min}^2\phi^\prime(\alpha_{\min}),\\
        &\Omega_{12}=\Omega_{21}=\alpha_{\max}\alpha_{\min}\phi^\prime(\alpha_{\max})\phi^\prime(\alpha_{\min})\lim\left[(\gamma_4-3)\sum_{i=1}^p U_{i1}^2 U_{ip}^2\right].
\end{align}
Then, we define the $2\times 2$ matrix $\mathbf{C}$
 as 
 \begin{align}
    \mathbf{C}=
    \begin{pmatrix}
        \alpha_{\max}^2\tau_{\max}&0\\
        0&\alpha_{\min}^2\tau_{\min}
    \end{pmatrix}
    \boldsymbol{\Omega}
    \begin{pmatrix}
        \alpha_{\max}^2\tau_{\max}&0\\
        0&\alpha_{\min}^2\tau_{\min}
    \end{pmatrix}.
\end{align}
To approximate $\tau_k$ we use the fact that $\hat\lambda_{n,k}^c\conp\phi(\alpha_k)$ and $F^{\hat\bfSigma_n^c}$ converges weakly in probability to $F^{c,H}$. Writing $\hat\lambda_{n,k}^c=\lambda_k(\hat\bfSigma_n^c)$, $\tau_1=\tau_{\max}$ and $\tau_p=\tau_{\min}$, we set
\begin{align}
    \hat\tau_k=\frac{1-c_n}{(\hat\lambda_{n,k}^c)^2}+\frac1n\sum_{i\ne k}\frac{1}{(\hat\lambda_{n,i}^c-\hat\lambda_{n,k}^c)^2},
    \qquad k\in\{1,p\}.
\end{align}
The identity $\tau_k=(\alpha_k^2\phi'(\alpha_k))^{-1}$ follows by differentiating the companion Stieltjes equation. Hence the Gaussian part of the diagonal entry of $\bfC$ corresponding to $\alpha_k$ is $2\alpha_k^4\tau_k$. If $v_k$ is the corresponding population eigenvector of $\bfD^{-1/2}\bfSigma\bfD^{-1/2}$, the fourth joint cumulant of the scores $\eta_k=v_k^\intercal\bfD^{-1/2}(\bfy-\mu)$ satisfies
\begin{align}
    \operatorname{cum}(\eta_k,\eta_k,\eta_l,\eta_l)
    =(\gamma_4-3)\alpha_k\alpha_l\sum_{i=1}^pU_{ik}^2U_{il}^2.
\end{align}
Its limit is exactly the remaining term in the corresponding entry of $\bfC$.

We propose estimating these cumulants using a small reserved sample. Reserve $m_n=\lfloor\sqrt n\rfloor$ observations, indexed by $\mathcal V$, and use the other $N=n-m_n$ observations to compute $\hat\bfD_N$, the unit eigenvectors $\hat v_{N,k}$ of $\hat\bfSigma_N^c$, and $\hat\alpha_{k,N}^c$, $\hat\lambda_{N,k}^c$ and $\hat\tau_{N,k}$ by the preceding formulas with $n$ replaced by $N$. Set
\begin{align}
    \hat r_k=\min\left\{1,\max\left\{N^{-1/4},\frac{1}{\hat\alpha_{k,N}^c\hat\lambda_{N,k}^c\hat\tau_{N,k}}\right\}\right\}.
\end{align}
For the reserved observations, whose mean is denoted by $\bar\bfy_{\mathcal V}$, define
\begin{align}
    \xi_{jk}&=\hat v_{N,k}^\intercal\hat\bfD_N^{-1/2}(\bfy_j-\bar\bfy_{\mathcal V}),
    &\hat h_{kl}&=\frac1{m_n}\sum_{j\in\mathcal V}\xi_{jk}\xi_{jl},\\
    \hat K_{kl}&=\frac1{m_n}\sum_{j\in\mathcal V}\xi_{jk}^2\xi_{jl}^2-\hat h_{kk}\hat h_{ll}-2\hat h_{kl}^2.
\end{align}
Our proposed covariance estimator is
\begin{align}
    \tilde C_{kl}&=2\,\mathbf1_{\{k=l\}}(\hat\alpha_{k,n}^c)^4\hat\tau_k+\frac{\hat K_{kl}}{\hat r_k\hat r_l},
    &\hat{\bfC}_n&=\left[(\tilde C_{kl})_{k,l\in\{1,p\}}\right]_+,
\end{align}
where the coordinates are ordered as $(1,p)$ and $[\,\cdot\,]_+$ replaces negative eigenvalues by zero. For $c\ge1$, only $k=l=1$ is used. On samples where a required inverse or a separated eigenvector is not defined, set $\hat\bfC_n$ to the identity of the appropriate size.

The factors $\hat r_k\hat r_l$ are motivated by the squared eigenvector overlaps $r_k=\alpha_k\phi'(\alpha_k)/\phi(\alpha_k)=1/(\alpha_k\phi(\alpha_k)\tau_k)$; see \cite{pu:zhang:hu:bai:2024}. In this approximation, each empirical loading contributes a factor $\sqrt{r_k}$ to its population score, while the remaining loading contributes negligibly to fourth cumulants. We present this construction as a proposed estimator, without asserting its consistency in full generality; Theorem \ref{thm_main_test_sph} is conditional on the stated covariance-consistency assumption.

If $\gamma_4=3$, or if $\sum_iU_{ik}^4\to0$ for each retained spike, all fourth-cumulant terms vanish. In these cases one may use the simpler full-sample estimator with diagonal entries $2(\hat\alpha_{k,n}^c)^4\hat\tau_k$ and zero off-diagonal entries, again projected onto the positive semidefinite cone if necessary.

\section{Background results}\label{sec:app_rmt}

In this section we collect several known facts concerning the behaviour of the eigenvalues of sample covariance matrix that we used in the proof of Theorem~\ref{thm_rmt}. 
Assume we have $B_n$, a $p\times p$ symmetric non-negative matrix with uniformly bounded norm, whose empirical spectral distribution $H_n=F^{B_n}$ converges weakly to a probability distribution $H$. Matrix $X$ is of size $p\times n$; its elements are i.i.d. centered random variables with unit variance and finite fourth moment $\gamma_4$. Dimensions are such that $p/n=c_n\rightarrow c\in(0,\infty)$. We consider a sample covariance
\[
S_n=\frac1n B_n^{1/2}X\Phi_n X^\intercal B_n^{1/2},
\]
where $\Phi_n=I_n$ or $\Phi_n=I_n-n^{-1}\mathbf1_n\mathbf1_n^\intercal$, and here $\mathbf1_n$ is the vector of ones.

\begin{proposition}\label{th:steltjes}
    The empirical spectral distribution $F^{S_n}$ of $S_n$ converges almost surely to a non-random probability distribution $F^{c,H}$, whose Stieltjes transform $s(z)$ is the unique Stieltjes-transform solution for $z\in\mathbb C^+$ of the equation
\begin{equation}\label{eq:prelim_stieltjes}
s(z)=\int \frac{1}{t(1-c-czs(z))-z}\,dH(t).
\end{equation}
\end{proposition}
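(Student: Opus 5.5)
This is Proposition~\ref{th:steltjes}, the generalized Marchenko--Pastur law for $S_n=\frac1n B_n^{1/2}X\Phi_nX^\intercal B_n^{1/2}$. The plan is to reduce to the uncentered case $\Phi_n=I_n$ and then invoke (or reproduce) the classical Silverstein--Bai argument.

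\emph{Step 1 (removing the centering).} Since $\Phi_n=I_n-n^{-1}\mathbf1_n\mathbf1_n^\intercal$, we have
\[
S_n=\frac1n B_n^{1/2}XX^\intercal B_n^{1/2}-\frac1n B_n^{1/2}X\mathbf1_n\mathbf1_n^\intercal X^\intercal B_n^{1/2}/n,
\]
a rank-one perturbation of the uncentered matrix. By the rank inequality $\sup_x|F^{A}(x)-F^{A+R}(x)|\le \operatorname{rank}(R)/p$, the two empirical spectral distributions differ in Kolmogorov distance by at most $1/p\to0$, deterministically. Hence it suffices to treat $\Phi_n=I_n$.

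\emph{Step 2 (truncation and centralization).} Only a finite second moment is needed here; the fourth moment assumption is more than enough. I will truncate the entries at $\eta_n\sqrt n$ with $\eta_n\downarrow0$, then recentre and renormalise them. Using the same rank inequality together with the Lévy-distance bound $L^4(F^{A},F^{B})\le \frac{2}{p^2}\tr(AA^\intercal+BB^\intercal)\tr((A-B)(A-B)^\intercal)$ for $A=n^{-1/2}B_n^{1/2}X$, the strong law of large numbers, and $\sup_n\|B_n\|<\infty$, the limit is unchanged almost surely. After this step the entries can be assumed bounded by $\eta_n\sqrt n$.

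\emph{Step 3 (Stieltjes transform identification).} For $z\in\mathbb C^+$ let $s_n(z)=p^{-1}\tr(S_n-z)^{-1}$, and write $\underline s_n$ for the companion transform of $n^{-1}X^\intercal B_nX$, so that $\underline s_n=-(1-c_n)/z+c_ns_n$.
\begin{itemize}
\item Write $S_n=\sum_j r_jr_j^\intercal$ with $r_j=n^{-1/2}B_n^{1/2}x_j$.
\item Use the Sherman--Morrison identity and the quadratic-form concentration $\E|x^\intercal Ax-\tr A|^{2}\le C\,\tr(AA^*)$ (with higher moments available under truncation) to show
\[
s_n(z)=\int\frac{dH_n(t)}{t(1-c_n-c_nzs_n(z))-z}+\varepsilon_n(z),\qquad \varepsilon_n\to0 \text{ a.s.}
\]
\item Obtain almost sure convergence from a martingale-difference decomposition with Burkholder's inequality, which gives $\E|s_n-\E s_n|^4=O(n^{-2})$, followed by Borel--Cantelli.
\end{itemize}

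\emph{Step 4 (passing to the limit).} By tightness (the spectral norms are a.s.\ bounded) and Vitali's theorem on a countable dense set of $z$, every subsequential limit $s$ satisfies \eqref{eq:prelim_stieltjes} with $H_n\to H$. The only nontrivial point in this step is that $t\mapsto(t(1-c-czs)-z)^{-1}$ is bounded: its imaginary part stays away from zero for $z\in\mathbb C^+$, because $\Im(zs)\ge0$.

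\emph{Main obstacle: uniqueness.} I expect the hardest step to be showing that \eqref{eq:prelim_stieltjes} has a unique solution in the class of Stieltjes transforms. I would follow Silverstein--Bai (1995): suppose $s_1,s_2$ are two solutions and subtract the equations. This gives $s_1-s_2=(s_1-s_2)\,\Gamma$, with $\Gamma$ an integral, and a Cauchy--Schwarz bound combined with comparing imaginary parts shows $|\Gamma|<1$, which forces $s_1=s_2$. Uniqueness, together with the continuity theorem for Stieltjes transforms, then identifies $F^{c,H}$ as the almost sure limit. In practice I would cite Bai--Silverstein (2010, Thm.~4.1) for Steps 2--4 and present Step 1 in full, since the centering is the only nonstandard feature here.
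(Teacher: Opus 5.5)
Your proposal is correct and takes the same route as the paper. The paper states this proposition as a known background fact, the Silverstein--Bai generalized Marchenko--Pastur law (cf.\ \cite{bai:silverstein:2010}), without proof. Your reduction of the centered case $\Phi_n=I_n-n^{-1}\mathbf 1_n\mathbf 1_n^\top$ to $\Phi_n=I_n$ via the rank inequality changes the ESD by at most $1/p$ in Kolmogorov distance, and it is exactly the step the paper leaves implicit.

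One point to state precisely: as in Bai--Silverstein, uniqueness holds in the set $\{s:\,-(1-c)/z+cs\in\mathbb C^+\}$. Every subsequential limit automatically lies in this set, since $(1-c_n)\delta_0+c_nF^{S_n}$ is a probability measure.
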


In order to describe the localization of the eigenvalues of the sample covariance matrix we introduce the adjacent matrix 
\[
\underline{ S}_n=\frac1n\Phi_n X^\intercal B_nX\Phi_n
\]
Its nonzero eigenvalues coincide with those of $S_n$, so its empirical spectral distribution $F^{\underline{S}_n}$ satisfies $F^{\underline{S}_n}(x)=\frac{n-p}{n}\delta_0
(x)+\frac{p}{n}F^{S_n}(x)$. It follows that its limiting distribution, denoted by $\underline{F}^{c,H}$ can be found immediately as $\underline{F}^{c,H}=(1-c)\delta_0
(x)+cF^{c,H}(x)$, as well as its Stieltjes transform 
\begin{align}
    \underline{s}(z)=-\frac{1-c}{z}+c\int\frac{1}{x-z}dF^{c,H}(x).
\end{align}
From Proposition~\ref{th:steltjes} one can derive that $\underline{s}(z)$ is the unique Stieltjes-transform solution of the equation
\begin{align}
    z=-\frac{1}{\underline{s}(z)}+\int\frac{ct}{1+t\underline{s}(z)}dH(t)
\end{align}
In \cite{bai:yao:2012}
 the detailed study of the connection between sample eigenvalues of $S_n$, support $\underline{F}^{c,H}$ and support of $H$ was conducted. We cite here some of the results that are relevant to our work. We recall function $\phi(\lambda)$ defined in \eqref{eq:def_phi} as 
\begin{align}
    \phi(\lambda)=\lambda+c\int\frac{\lambda t}{\lambda-t}dH(t)
\end{align}
which plays a significant role in characterizing supercritical eigenvalues.
\begin{proposition}[Convergence of sample outlier eigenvalues]
\label{thm:sample_outlier_convergence}
Under the assumptions described above, let $\alpha_k\notin\operatorname{supp}(H)$
be a fixed supercritical eigenvalue of multiplicity $m_k$, uniformly separated
from the remaining population eigenvalues. For a lower extreme outlier assume also $c<1$ and $\inf_n\lambda_p(B_n)>0$.
Assume that $\alpha_k$ has descending population ranks
$$
    \nu_k+1,\ldots,\nu_k+m_k.
$$
Then the corresponding $m_k$ consecutive sample eigenvalues satisfy
\begin{align}
    \hat\lambda_{\nu_k+j,n}
    \xrightarrow{\mathrm{a.s.}}
    \phi(\alpha_k),
    \qquad j=1,\ldots,m_k.
    \label{eq:sample_outlier_limit}
\end{align}
\end{proposition}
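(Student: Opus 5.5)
The plan is to reduce to a rank-one perturbation of a centered sample covariance matrix and apply the isolated-outlier theory of \cite{bai:yao:2012} on the companion side. The first step is to write
\[
S_n=\tfrac1n \bfB_n^{1/2}X\Phi_nX^\intercal \bfB_n^{1/2}
\]
with $\bfB_n=\bfU\bfLambda\bfU^\intercal$. Isolating the spike direction $u_k$ (the $k$th column of $\bfU$), we write $S_n=\bfS_{-k}$-type bulk plus a low-rank part. We then characterise the eigenvalues of $S_n$ that lie outside the spectrum of the bulk matrix, $\tfrac1n\bfLambda_{-k}^{1/2}\bfU_{-k}^\intercal X\Phi_nX^\intercal\bfU_{-k}\bfLambda_{-k}^{1/2}$, as the zeros of the determinantal equation
\[
\det\bigl(\bfI - \alpha_k\, y^\intercal(\ldots)\,y\bigr)=0,
\]
obtained via the Schur complement. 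This is the standard Baik--Silverstein / Bai--Yao device: the outliers $\hat\lambda$ solve
\[
1+\alpha_k\,\underline{s}_{\mathrm{bulk}}(\hat\lambda)\,(1+o(1))=0
\]
up to quadratic-form fluctuations.

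Next, the quadratic forms $\tfrac1n x^\intercal \Phi_n(\underline{\bfS}_{\mathrm{bulk}}-z)^{-1}\Phi_n x$ concentrate around the companion Stieltjes transform. We show this uniformly for $z$ in a compact set $K$ outside $\operatorname{supp}F^{c,H}$ containing $\phi(\alpha_k)$, using variance bounds for quadratic forms (finite fourth moment suffices). The centering $\Phi_n$ is rank one and is therefore harmless by interlacing.

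Exact separation (Bai--Silverstein) shows that, almost surely for large $n$, the bulk matrix has no eigenvalues in a neighbourhood of $\phi(\alpha_k)$. This uses that $\phi'(\alpha_k)>0$, $\alpha_k\notin\operatorname{supp}(H)$, and the uniform separation of $\alpha_k$ from the other population eigenvalues. For a lower outlier, $c<1$ and $\inf_n\lambda_p(\bfB_n)>0$ keep the gap away from $0$. With this in hand, the limiting equation $1+\alpha_k\underline{s}(z)=0$ has, by the identity $\underline{s}(\phi(\alpha))=-1/\alpha$ on the monotone branch where $\phi'>0$, exactly the solution $z=\phi(\alpha_k)$ with multiplicity $m_k$. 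Rouché's theorem, or Hurwitz applied to the random analytic function on $K$, then yields exactly $m_k$ sample eigenvalues converging almost surely to $\phi(\alpha_k)$. These carry the prescribed ranks $\nu_k+j$ by a counting argument: separation together with Weyl interlacing between $S_n$ and the spike-removed model fixes how many eigenvalues lie above the gap.

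The main obstacle is the exact-separation and counting step. One must show that no bulk eigenvalue strays into the gap and that the rank labels match. The route I would try first is to invoke the no-eigenvalue-outside-support result of Bai--Silverstein (1998) for the spike-free model, and then transfer it to the spiked model by finite-rank interlacing.
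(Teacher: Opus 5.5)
Your architecture (Schur-complement determinant for the outliers, exact separation for the spike-free model, Hurwitz/Rouch\'e plus Weyl interlacing for the count and the ranks) is the Bai--Yao argument. That is all the paper relies on here: it gives no proof of its own and quotes \cite{bai:yao:2012}.

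The step that does not work as you justify it is the concentration of $y^\top(\underline{\mathbf S}_{\mathrm{bulk}}-z)^{-1}y$ around $\underline s_{\mathrm{bulk}}(z)\mathbf I_{m_k}$. Here $y=n^{-1/2}\Phi_nX^\top\mathbf U_k$, and $\mathbf U_k$ is the $p\times m_k$ matrix of spike eigenvectors. The variance bound $\E|x^\top\mathbf Ax-\tr\mathbf A|^2\le C\,\E x_1^4\,\|\mathbf A\|_F^2$ requires $\mathbf A$ to be independent of $x$. But $\underline{\mathbf S}_{\mathrm{bulk}}=\frac1n\Phi_nX^\top\mathbf U_{-k}\boldsymbol\Lambda_{-k}\mathbf U_{-k}^\top X\Phi_n$ is a function of $\mathbf U_{-k}^\top X$. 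For each column $X_{\cdot t}$, the projections $\mathbf U_k^\top X_{\cdot t}$ and $\mathbf U_{-k}^\top X_{\cdot t}$ are uncorrelated but, for non-Gaussian entries, not independent. By the Darmois--Skitovich theorem, $a^\top X_{\cdot t}$ and $b^\top X_{\cdot t}$ can be independent only if $a$ and $b$ have disjoint supports. For an orthogonal $\mathbf U$ this essentially forces the spike eigenspace to be a coordinate subspace, i.e.\ the block-diagonal, independent spike/bulk model of \cite{bai:yao:2012} and \cite{bai:ding:2012}. For Gaussian $X$, rotation invariance restores independence, and your argument is fine there. The proposition, however, concerns an arbitrary $\mathbf B_n=\mathbf U\boldsymbol\Lambda\mathbf U^\top$. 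Examples are equicorrelation with non-Gaussian data, where $u_1=\mathbf 1_p/\sqrt p$, or model MII with a Haar $\mathbf U$ and $t_5$ entries. The paper itself invokes \cite{zhang:zheng:pan:zhong:2022} in the proof of Theorem~\ref{thm_rmt} precisely to avoid the independent blocks of \cite{bai:ding:2012}. Separately, variance bounds only give convergence in probability; the almost-sure statement needs truncation and higher-moment bounds.

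What is missing is an isotropic, rather than tracial, law for the spike-free resolvent. For instance, fix $\tau>0$ inside the bulk and write $\mathbf B_n=\mathbf T_\tau+(\alpha_k-\tau)\mathbf U_k\mathbf U_k^\top$. Let $\mathbf S_\tau=\frac1n\mathbf T_\tau^{1/2}X\Phi_nX^\top\mathbf T_\tau^{1/2}$ and $\underline{\mathbf S}_\tau=\frac1n\Phi_nX^\top\mathbf T_\tau X\Phi_n$. Since $\mathbf T_\tau^{1/2}\mathbf U_k=\tau^{1/2}\mathbf U_k$, the push-through identity $\frac1nY(\frac1nY^\top Y-z)^{-1}Y^\top=\mathbf I_p+z(\frac1nYY^\top-z)^{-1}$ with $Y=\mathbf T_\tau^{1/2}X\Phi_n$ gives
\[
\frac{\tau}{n}\,\mathbf U_k^\top X\Phi_n(\underline{\mathbf S}_\tau-z)^{-1}\Phi_nX^\top\mathbf U_k=\mathbf I_{m_k}+z\,\mathbf U_k^\top(\mathbf S_\tau-z)^{-1}\mathbf U_k .
\]
So the outlier determinant involves only the isotropic block $\mathbf U_k^\top(\mathbf S_\tau-z)^{-1}\mathbf U_k$. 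Its deterministic equivalent is $-\{z(1+\tau\underline s(z))\}^{-1}\mathbf I_{m_k}$: a global isotropic law on $\mathbb C^+$, transported to the real gap by exact separation and Vitali's theorem. This turns the determinant into $\{(1+\alpha_k\underline s(z))/(1+\tau\underline s(z))\}^{m_k}$, after which your Hurwitz and counting steps go through unchanged. Alternatively, rely on \cite{zhang:zheng:pan:zhong:2022}, whose point is to remove the block-diagonal assumption.

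Two smaller points. First, exact separation should be applied to $\frac1n\mathbf T^{1/2}X\Phi_nX^\top\mathbf T^{1/2}$ with $\mathbf T=\mathbf B_n-\alpha_k\mathbf U_k\mathbf U_k^\top$. This is an i.i.d.-entry model with population spectrum $H_{n,k}^{(0)}$, as in the paper's treatment of $I_3^{(k)}$. Do not apply it to the $(p-m_k)$-dimensional matrix built from $\mathbf U_{-k}^\top X$, whose entries are not i.i.d. So reformulated, separation and counting are standard, and the dependent quadratic form is the real obstacle. Second, rank-one interlacing controls the centering only at trace level. It does not by itself exclude one centred bulk eigenvalue falling into the gap below the bulk, which is where the lower outlier lives, so that case needs a separate argument.
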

To characterize all other eigenvalues we use the next Proposition.
\begin{proposition}\label{th:eigen_in_spec}
Under the conditions stated above, there exists a deterministic constant $a_+<\infty$ such that, almost surely for all sufficiently large $n$,
\[
    \operatorname{spec}(S_n)\subset[0,a_+].
\]
If, in addition, $c<1$ and $\operatorname{spec}(B_n)\subset[b_-,b_+]$ for constants $0<b_-<b_+<\infty$, then there is also $a_->0$ such that, almost surely for all sufficiently large $n$,
\[
    \operatorname{spec}(S_n)\subset[a_-,a_+].
\]
\end{proposition}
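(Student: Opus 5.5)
The plan is to reduce both claims to extreme singular value bounds for the i.i.d. matrix $X$. For the upper bound I use submultiplicativity:
\[
\|S_n\|\le \|B_n\|\,\|\Phi_n\|\,\bigl\|n^{-1}XX^\intercal\bigr\|=\|B_n\|\,\bigl\|n^{-1}XX^\intercal\bigr\|,
\]
since $\Phi_n$ is either $I_n$ or an orthogonal projection. Under i.i.d. centered entries with unit variance and finite fourth moment, the Bai--Yin theorem gives $\|n^{-1}XX^\intercal\|\to(1+\sqrt c)^2$ almost surely. With $b:=\sup_n\|B_n\|<\infty$, I take
\[
a_+:=b(1+\sqrt c)^2+1 .
\]
Almost surely, for all large $n$, we then have $\operatorname{spec}(S_n)\subset[0,a_+]$, because $S_n$ is positive semidefinite.

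For the lower bound with $c<1$ and $\operatorname{spec}(B_n)\subset[b_-,b_+]$, write $S_n=B_n^{1/2}W_nB_n^{1/2}$ with $W_n:=n^{-1}X\Phi_nX^\intercal$. Courant--Fischer gives $\lambda_p(S_n)\ge b_-\,\lambda_p(W_n)$. It therefore suffices to show $\liminf_n\lambda_p(W_n)>0$ almost surely. For $\Phi_n=I_n$ this is again Bai--Yin: $\lambda_p(n^{-1}XX^\intercal)\to(1-\sqrt c)^2>0$ almost surely, since $c<1$. So in this case I take $a_-:=b_-(1-\sqrt c)^2/2$.

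The centered case $\Phi_n=I_n-n^{-1}\mathbf 1_n\mathbf 1_n^\intercal$ is the step I expect to be the main obstacle. Here $W_n=n^{-1}XX^\intercal-\bar x\bar x^\intercal$ is a rank-one downward perturbation. Plain interlacing only gives $\lambda_p(W_n)\ge\lambda_{p+1}(n^{-1}XX^\intercal)=0$, which is useless. Also, $X\Phi_n$ no longer has i.i.d. entries outside the Gaussian case, so Bai--Yin cannot be applied directly.

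My first attempt is the pairwise identity
\[
nX\Phi_nX^\intercal=\tfrac12\sum_{j\neq k}(x_j-x_k)(x_j-x_k)^\intercal .
\]
Keeping only a perfect matching of disjoint pairs gives $nW_n\succeq\sum_{i\le n/2}d_id_i^\intercal$, where $d_i=x_{2i-1}-x_{2i}$ are i.i.d. with i.i.d. entries of variance $2$ and finite fourth moment. Bai--Yin then applies with aspect ratio $2c$. This settles $c<1/2$ but not the full range $c<1$.

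For general $c<1$ I would therefore argue on the companion matrix instead. The $p$ nonzero eigenvalues of $W_n$ are those of $n^{-1}\Phi_nX^\intercal X\Phi_n$, so I would invoke the ``no eigenvalues outside the support'' theorem of Bai and Silverstein. That result treats general population matrices and, as already used in Proposition~\ref{thm:sample_outlier_convergence} and Corollary~2.4 of \cite{zhang:zheng:pan:zhong:2022}, extends to the empirically centered case. The extension comes from applying it to the $p\times(n-1)$ representation $X O$, where $O$ is an $n\times(n-1)$ orthonormal basis of $\mathbf 1_n^\perp$; this step needs checking, because the resolvent arguments there rely only on the finite fourth moment and the rank-one structure of the centering. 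The limiting law $F^{c,H}$ has support bounded away from $0$ when $c<1$ and $H$ is supported in $[b_-,b_+]$. Together with the absence of eigenvalues outside the support, this gives an almost sure lower bound $\liminf_n\lambda_p(S_n)\ge a_->0$.
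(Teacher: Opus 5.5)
Your upper bound (submultiplicativity with $\|\Phi_n\|\le 1$ plus Bai--Yin) and your lower bound for $\Phi_n=I_n$ (Courant--Fischer plus Bai--Yin) are exactly the paper's argument. The paper handles the centered case in a single clause, citing \cite{bai:silverstein:2010}. Your treatment of the centered lower bound, however, has a genuine gap.

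First, the matching step is mis-normalized. Since $nX\Phi_nX^\top=\sum_{j<k}(x_j-x_k)(x_j-x_k)^\top$, discarding the pairs outside a perfect matching gives only $nW_n=X\Phi_nX^\top\succeq n^{-1}\sum_i d_id_i^\top$. This yields $\lambda_p(W_n)\gtrsim(1-\sqrt{2c})^2/n\to0$. Your inequality $nW_n\succeq\sum_id_id_i^\top$ is false: for $n=2$ one has $nW_n=\tfrac12 d_1d_1^\top$. The fix is to discard directions of $\Phi_n$ rather than terms of the pair sum. The matrix $\tfrac12\sum_i(e_{2i-1}-e_{2i})(e_{2i-1}-e_{2i})^\top$ is the orthogonal projection onto a subspace of $\mathbf 1_n^\perp$, hence $\preceq\Phi_n$. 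Therefore $nW_n\succeq\tfrac12\sum_id_id_i^\top$, and $\liminf_n\lambda_p(W_n)\ge\tfrac12(1-\sqrt{2c})^2$ almost surely. This still covers only $c<1/2$.

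For $1/2\le c<1$ nothing is proved. The matrix $XO$ has independent rows but, outside the Gaussian case, dependent entries, so no i.i.d.-entry theorem applies to it. Moreover, the no-eigenvalues theorem only excludes closed intervals $[a,b]$ with $a>0$. On its own it cannot stop $\lambda_p$ from drifting to $0$; you would need either an a priori lower bound (which is the claim itself) or an eigenvalue-counting statement.

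The missing observation is that no reduction is needed. The companion matrix you wrote down is already a standard sample covariance matrix with the roles of dimension and sample size exchanged:
\[
n^{-1}\Phi_nX^\top X\Phi_n=\tfrac pn\cdot\tfrac1p\,\Phi_n^{1/2}(X^\top)(X^\top)^\top\Phi_n^{1/2}.
\]
Here the dimension is $n$, the $p$ samples are the rows of $X$ (so $X^\top$ has i.i.d.\ entries), and the population matrix is $\Phi_n$, whose spectral distribution tends to $\delta_1$. The centering has become a singular population covariance, which the Bai--Silverstein theory allows. The dimension-to-sample ratio times $1-H(0)$ tends to $1/c>1$. In this regime the exact-separation results in \cite{bai:silverstein:2010} show that the $p$-th largest eigenvalue of $\tfrac1p\Phi_nX^\top X\Phi_n$ converges almost surely to $(1-1/\sqrt c)^2$, the positive left edge of the non-degenerate part of the limiting support. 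Rescaling by $p/n$ gives $\lambda_p(W_n)\to(1-\sqrt c)^2$ almost surely. Your Courant--Fischer step then gives $a_-=b_-(1-\sqrt c)^2/2$ in the centered case as well.
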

These bounds follow from the standard extreme-singular-value bounds for $X/\sqrt n$; empirical centering has the same bounds when $c<1$; see \cite{bai:silverstein:2010}.

To formulate the CLT for sample eigenvalues, obtained in \cite{zhang:zheng:pan:zhong:2022}, we label the $K$ selected fixed, simple supercritical eigenvalues of $B_n$ as $\alpha_1>\ldots>\alpha_K>0$, uniformly separated from the remaining population spectrum and outside $\operatorname{supp}(H)$. Their sample counterparts are denoted by $\hat\lambda_1,\ldots,\hat\lambda_K$; their ranks need not be consecutive. Lower extreme outliers are considered when $c<1$ and $\inf_n\lambda_p(B_n)>0$.
\begin{assumption}\label{ass:variance}
Assume that for each $i=1,\ldots,K$ the following limits exist:
\begin{align}
\sigma_i^2
&:=
\lim_{p\to\infty}
(\gamma_4-3)
\frac{\alpha_i^2\{\phi'(\alpha_i)\}^2}{\phi(\alpha_i)^2}
\sum_{j=1}^{p}u_{ij}^4
+
\frac{2\alpha_i^2\phi'(\alpha_i)}{\phi(\alpha_i)^2},
\label{eq:sigmaii}
\\
\sigma_{ij}
&:=
\lim_{p\to\infty}
(\gamma_4-3)
\frac{\alpha_i\alpha_j
\phi'(\alpha_i)\phi'(\alpha_j)}
{\phi(\alpha_i)\phi(\alpha_j)}
\sum_{k=1}^{p}
u_{ik}^2u_{jk}^2,
\qquad i\neq j,
\label{eq:sigmaij}
\end{align}
where $\gamma_4=\E X_{11}^4$ and
$u_i=(u_{i1},\ldots,u_{ip})^\top$ denotes the population eigenvector
associated with the spike $\alpha_i$.
\end{assumption}
\begin{proposition}\label{th:clt_eigen}
Suppose assumptions from above hold and let
\[
\lambda_i=\phi_{n,i}(\alpha_i),\qquad i=1,\ldots,K,
\]
where $\phi_{n,i}$ is obtained from $\phi$ by replacing $(c,H)$ with $(c_n,H_{n,i}^{(0)})$, and $H_{n,i}^{(0)}=H_n+p^{-1}(\delta_0-\delta_{\alpha_i})$.

Define
\[
\Lambda_n
=
\left(
\sqrt{n}\frac{\hat\lambda_1-\lambda_1}{\lambda_1},
\ldots,
\sqrt{n}\frac{\hat\lambda_K-\lambda_K}{\lambda_K}
\right).
\]
Then
\[
\Lambda_n
\xrightarrow{d}
N(0,\Sigma),
\]
where $\Sigma=(\Sigma_{ij})_{1\le i,j\le K}$ is the covariance matrix
given by
\[
\Sigma_{ij}
=
\begin{cases}
\sigma_i^2,& i=j,\\
\sigma_{ij},& i\neq j,
\end{cases}
\]
with $\sigma_i^2$ and $\sigma_{ij}$ defined in
Assumption~\ref{ass:variance}.
\end{proposition}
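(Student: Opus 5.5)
The plan is to derive Proposition~\ref{th:clt_eigen} from the general-factor spiked CLT of \cite{zhang:zheng:pan:zhong:2022} (their Theorem 2.2 and Corollary 2.4). For completeness I will also run the underlying determinant-equation argument, so that three features of our setting are visibly covered: the empirical centering $\Phi_n$, lower outliers when $c<1$, and the finite-sample centering $\lambda_i=\phi_{n,i}(\alpha_i)$.

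\textbf{Step 1 (secular equation).} Write $B_n=\bfU\bfLambda\bfU^\intercal$ and rotate so that $S_n$ has the same eigenvalues as
$$n^{-1}\bfLambda^{1/2}\bfU^\intercal X\Phi_nX^\intercal\bfU\bfLambda^{1/2}.$$
Isolate the $K$ spike coordinates. Let $\mathbf Y_1=\bfU_K^\intercal X$ (a $K\times n$ matrix) and $\mathbf Y_2=\bfU_{-K}^\intercal X$. Set $\bfS_{22}=n^{-1}\bfLambda_{-K}^{1/2}\mathbf Y_2\Phi_n\mathbf Y_2^\intercal\bfLambda_{-K}^{1/2}$.
\begin{itemize}
\item By Propositions~\ref{thm:sample_outlier_convergence} and \ref{th:eigen_in_spec}, applied to the model with the spikes removed, with probability tending to one the spectrum of $\bfS_{22}$ avoids a fixed neighborhood of each $\phi(\alpha_i)$.
\item On that event, the Schur complement shows that $\hat\lambda_i$ solves
$$\det\Bigl(\bfLambda_K^{-1}-n^{-1}\mathbf Y_1\Phi_n\bigl(\bfI_n-n^{-1}\Phi_n\mathbf Y_2^\intercal\bfLambda_{-K}\mathbf Y_2\Phi_n/\lambda\bigr)^{-1}\Phi_n\mathbf Y_1^\intercal/\lambda\Bigr)=0.$$
\item Equivalently, in companion-resolvent form, $\bfLambda_K^{-1}+\mathbf Y_1\Phi_n\underline{\mathbf R}(\lambda)\Phi_n\mathbf Y_1^\intercal/n$ is singular, where $\underline{\mathbf R}(\lambda)=(n^{-1}\Phi_n\mathbf Y_2^\intercal\bfLambda_{-K}\mathbf Y_2\Phi_n-\lambda)^{-1}$.
\end{itemize}

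\textbf{Step 2 (expansion around $\lambda_i$).} Decompose
$$n^{-1}\mathbf Y_1\Phi_n\underline{\mathbf R}\Phi_n\mathbf Y_1^\intercal=n^{-1}\tr(\underline{\mathbf R})\,\bfI_K+\mathbf W_n(\lambda),$$
where $\mathbf W_n$ is a centered quadratic-form fluctuation of order $n^{-1/2}$.
\begin{itemize}
\item The deterministic part is handled as in the treatment of $I_3^{(k)}$. Using the bias bounds of \cite{najim:yao:2016:proof} on the spectral gap, $n^{-1}\tr\underline{\mathbf R}(\lambda_i)=\underline s^{(0)}_{n,i}(\lambda_i)+\mathcal O_\PR(n^{-1})=-1/\alpha_i+\mathcal O_\PR(n^{-1})$. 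This is exactly why the centering is $\phi_{n,i}$ with $H^{(0)}_{n,i}$: it avoids any rate for $H_n\to H$.
\item The centering $\Phi_n$ is a rank-one perturbation, so it changes traces by $\mathcal O_\PR(1)$ and quadratic forms negligibly.
\item Because the outliers are simple and separated, the $i$th diagonal entry decouples. A first-order Taylor expansion in $\lambda$ then gives
$$\sqrt n(\hat\lambda_i-\lambda_i)=\bigl(\underline s'(\phi(\alpha_i))\bigr)^{-1}\sqrt n\,W_{n,ii}(\phi(\alpha_i))+o_\PR(1).$$
Here $\underline s'(\phi(\alpha_i))=1/(\alpha_i^2\phi'(\alpha_i))$, which requires $\phi'(\alpha_i)>0$.
\item For lower outliers ($c<1$, $\inf_n\lambda_p(B_n)>0$) the same argument goes through on the gap below the bulk. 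Proposition~\ref{th:eigen_in_spec} keeps the spectrum away from $0$ there.
\end{itemize}

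\textbf{Step 3 (joint CLT of the quadratic forms) — the main obstacle.} Show that $\sqrt n(W_{n,ii})_{i\le K}$ is jointly Gaussian with the covariance in Assumption~\ref{ass:variance}.
\begin{itemize}
\item In a block-diagonal model, $\mathbf Y_1$ would be independent of $\underline{\mathbf R}$ and the standard quadratic-form CLT would apply.
\item Here $u_i$ is a general unit vector, so $\mathbf Y_1=\bfU_K^\intercal X$ and $\mathbf Y_2$ are only uncorrelated, not independent, unless the data are Gaussian.
\item I would use a martingale-difference decomposition over the columns $j=1,\dots,n$, conditioning on $\mathcal F_j=\sigma(x_1,\dots,x_j)$, combined with rank-one resolvent removal.
\item The conditional variances then reduce to $2\alpha_i^2\phi'(\alpha_i)$, plus fourth-cumulant contributions. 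These involve $\E[(u_i^\intercal x)^2(u_l^\intercal x)^2]-\text{Gaussian part}=(\gamma_4-3)\sum_k u_{ik}^2u_{lk}^2$. This produces the $\sigma_{ij}$ terms, whose limits are assumed to exist.
\item Cross-terms between $\mathbf Y_1$ and $\mathbf Y_2$ vanish at leading order by orthogonality of $\bfU$ and the Lindeberg condition, which uses the finite fourth moment.
\end{itemize}
This is precisely the content of Theorem 2.2 in \cite{zhang:zheng:pan:zhong:2022}, which I would cite for the general-factor fluctuation step and verify only the centering and lower-outlier adaptations. Dividing by $\lambda_i\to\phi(\alpha_i)$ and applying Slutsky's lemma yields $\Lambda_n\cond N(0,\Sigma)$.
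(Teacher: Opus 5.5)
Your proposal follows the paper's route. Both obtain the statement from Theorem 2.2 and Corollary 2.4 of \cite{zhang:zheng:pan:zhong:2022}: exact separation gives fixed spectral gaps (away from zero for lower outliers) on which the determinant expansion and the joint quadratic-form CLT go through, and an $\mathcal O(n^{-1})$ argument passes to the centering $\phi_{n,i}(\alpha_i)$. One harmless slip: linearizing $\alpha_i^{-1}+n^{-1}\tr\underline{\mathbf R}(\lambda)+W_{n,ii}(\lambda)=0$ gives $\hat\lambda_i-\lambda_i\approx -W_{n,ii}/\underline s'(\phi(\alpha_i))$, with a minus sign, which does not affect the centered Gaussian limit.
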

The empirical-centering statement is Corollary 2.4 of \cite{zhang:zheng:pan:zhong:2022}. Their Theorem 2.2 is stated for upper outliers. The same proof gives the lower-outlier and mixed upper/lower statements used here: exact separation provides fixed neighborhoods of the outlier limits, away from zero and the remaining spectrum, on which the resolvents are uniformly bounded. The determinant expansion and the joint quadratic-form calculation then apply with the same covariance formulas. Replacing the other finitely many selected spikes by zero in the deterministic centering changes $\lambda_i$ by only $\mathcal O(n^{-1})$.

\bibliographystyle{plainnat}
\bibliography{references,proof_references}

\end{document}